\documentclass[11pt]{amsart}

\usepackage[margin=1.05in]{geometry}
\usepackage{amsmath,amssymb,mathtools,amsthm}
\usepackage{aliascnt}
\usepackage{enumitem}
\usepackage{microtype}
\usepackage{xcolor}
\usepackage[colorlinks=true,linkcolor=blue!55!black,citecolor=blue!55!black,urlcolor=blue!55!black]{hyperref}
\usepackage[nameinlink,capitalize,noabbrev]{cleveref}
\numberwithin{equation}{section}

\newtheorem{theorem}{Theorem}[section]
\newaliascnt{proposition}{theorem}
\newtheorem{proposition}[proposition]{Proposition}
\aliascntresetthe{proposition}
\newaliascnt{lemma}{theorem}
\newtheorem{lemma}[lemma]{Lemma}
\aliascntresetthe{lemma}
\newaliascnt{corollary}{theorem}
\newtheorem{corollary}[corollary]{Corollary}
\aliascntresetthe{corollary}
\theoremstyle{definition}
\newaliascnt{definition}{theorem}

\aliascntresetthe{definition}
\theoremstyle{remark}
\newaliascnt{remark}{theorem}

\aliascntresetthe{remark}

\newcommand{\Cov}{\operatorname{Cov}}
\newcommand{\R}{\mathbb{R}}
\newcommand{\Pp}{\mathbb{P}}
\newcommand{\E}{\mathbb{E}}
\newcommand{\cL}{\mathcal{L}}
\newcommand{\cH}{\mathcal{H}}
\newcommand{\HUT}{\mathcal{H}^{\mathrm{UT}}}
\newcommand{\HUTz}{\mathcal{H}^{\mathrm{UT}}_0}
\newcommand{\KPZ}{\mathfrak{h}}
\newcommand{\Law}{\operatorname{Law}}

\newcommand{\1}{\mathbf{1}}
\newcommand{\eps}{\varepsilon}

\title[THE KPZ UPPER-TAIL FIELD]{The KPZ Upper-Tail Field as a KPZ Fixed Point with Brownian Initial Data}
\author{Ruixuan Zhang}
\address{Department of Mathematics, University of Utah, Salt Lake City, UT 84112, USA}
\date{September 21, 2026.}
\subjclass[2020]{60K35, 60F17}
\keywords{KPZ fixed point, KPZ upper-tail field, directed landscape, Brownian initial profile}

\begin{document}

\begin{abstract}
The KPZ upper-tail field introduced by Liu and Zhang arises as the scaling limit of the narrow-wedge KPZ fixed point in a microscopic neighborhood of a conditioned large height. In this paper, we give a direct probabilistic identification of the upper-tail field.  We identify the grounded positive-time sector with the KPZ fixed point started from the two-sided Brownian \(V\)-profile \(B_{\mathrm{ts}}(2x)-2|x|\), while the negative-time sector is represented through an explicit exponential tilt of the same evolution.  The argument uses the directed-landscape variational structure and the local Brownian comparison under upper-tail conditioning.  As consequences of these representations, we recover probabilistically the positive- and negative-time large-scale limits of the KPZ upper-tail field.
\end{abstract}

\maketitle

\section{Introduction}\label{sec:introduction}

The KPZ fixed point \(\KPZ(x,t;h_0)\) of Matetski, Quastel and Remenik \cite{MQR21} and the directed landscape \(\cL(y,s;x,t)\) of Dauvergne, Ortmann and Vir\'ag \cite{DOV22} are two universal objects in the Kardar--Parisi--Zhang universality class.  The KPZ fixed point describes the universal large-scale evolution of one-dimensional random growth models under the characteristic $1{:}2{:}3$ scaling of time, space, and height fluctuations.  The directed landscape provides the corresponding space-time geometry and arises as the universal scaling limit of directed last-passage percolation models.  The directed landscape couples KPZ fixed points started from different initial conditions through the variational formula \(\KPZ(x,t;h_0)=\sup_{y\in\R}\left\{h_0(y)+\cL(y,0;x,t)\right\}.\) Thus the directed landscape couples KPZ fixed points started from different initial conditions on a common probability space and separates the randomness of the initial profile from the subsequent universal evolution. The relation between these objects has recently been sharpened by Dauvergne and Zhang \cite{DZ25}, who characterized the directed landscape from KPZ fixed-point marginals and natural coupling properties.

Upper-tail conditioning provides a natural way to study the geometry of atypically large fluctuations in the KPZ universality class.  Rather than considering only the probability of a rare high value, one may ask how the surrounding space-time field is modified by the conditioning and whether a universal conditional geometry emerges.  Several recent works have addressed this question from different perspectives.  Liu and Wang \cite{LW24} studied the KPZ fixed point before a large value at a prescribed location and obtained a conditional scaling limit with Brownian-bridge behavior.  Nissim and Zhang \cite{NZ22} studied the field after the conditioned point at the one-point level and showed that its distribution is asymptotically described by the one-point law of a refreshed KPZ fixed point, together with an explicit error expansion. Ganguly, Hegde and Zhang \cite{GHZ25} established Brownian-bridge limits for upper-tail path measures and developed conditional comparison estimates that describe the local geometry near a high point.  Related upper-tail questions for the KPZ equation have been studied in \cite{GH22,GLLT23,LT25}, while upper-tail large deviations for the directed landscape and related passage-time objects were investigated in \cite{DDV24,DT24}.  Conditional limits in the periodic KPZ setting were considered in \cite{BL24}.

The two regimes were subsequently unified by Liu and Zhang in \cite{LZ25}.  They considered the narrow-wedge KPZ fixed point conditioned to take a large value at a fixed space-time point and studied the field on the microscopic scale around the conditioned event.  After centering at a large height at $(0,1)$ and rescaling space and time by $L^{-1}$ and $L^{-3/2}$, respectively, they obtained a limiting random field $\HUT(x,t)$ on the full space-time plane, called the KPZ upper-tail field. This field simultaneously records the local fluctuations before and after the conditioned high point and provides a space-time description of the upper-tail environment. Its grounded version
\begin{equation}\label{eq:groundedfield}
	\HUTz(x,t)=\HUT(x,t)-\HUT(0,0)
\end{equation}
has at time zero the exact spatial law
\begin{equation}\label{eq:intro-timezero}
	\left\{\HUTz(x,0):x\in\R\right\}
	\overset{\mathrm{f.d.d.}}{=}
	\left\{B_{\mathrm{ts}}(2x)-2|x|:x\in\R\right\},
\end{equation}
where $\overset{\mathrm{f.d.d.}}{=}$ denotes equality of finite-dimensional distributions and $B_{\mathrm{ts}}$ is a two-sided standard Brownian motion, see \cite[Proposition~1.5(d)]{LZ25}. They further showed that a subsequent large-scale rescaling leads to two different asymptotic regimes.  The negative-time sector converges to a Brownian-type field, while the positive-time sector converges to the narrow-wedge KPZ fixed point \cite[Proposition~1.8]{LZ25}.

The purpose of this paper is to identify the dynamics underlying the upper-tail field.  Our first main result shows that the entire positive-time sector is exactly the KPZ fixed point started from the Brownian $V$-profile in \eqref{eq:intro-timezero}.  Thus the upper-tail conditioning is encoded in the random profile at the distinguished time, while the evolution after that time is given by the usual KPZ fixed-point dynamics.  We then extend this description to finite-dimensional distributions involving negative times.  Using the skew-shift symmetry of Liu and Zhang, we represent the negative-time sector through an exponential change of measure applied to the same positive-time KPZ fixed point.  As a consequence, the positive- and negative-time large-scale limits of \cite[Proposition~1.8]{LZ25} admit direct probabilistic derivations from these representations.

The proofs are probabilistic throughout.  For the positive-time representation, we use the metric composition law and independent increments of the directed landscape together with the conditional Brownian comparison estimates of Ganguly, Hegde and Zhang \cite{GHZ25}.  For the negative-time asymptotics, we combine the tilted representation with one-point Tracy--Widom asymptotics and the local Brownian structure of the directed landscape under upper-tail conditioning.  No exact multipoint formula or asymptotic analysis of such formulas is required.

\subsection{Main results}

Let $B_{\mathrm{ts}}$ be independent of $\cL$ and set
\begin{equation}\label{eq:vprofile}
  h_\vee(x)=B_{\mathrm{ts}}(2x)-2|x|.
\end{equation}
The first theorem identifies the positive-time upper-tail field at its original upper-tail scale.

\begin{theorem}[Positive-time representation]\label{thm:main}
One has
\begin{equation}\label{eq:main-fdd}
  \left\{\HUTz(x,t):(x,t)\in\R\times[0,\infty)\right\}
  \overset{\mathrm{f.d.d.}}{=}
  \left\{\KPZ(x,t;h_\vee):(x,t)\in\R\times[0,\infty)\right\}.
\end{equation}
Equivalently, for every $m\ge1$ and $(x_i,t_i)\in\R\times(0,\infty)$,
\begin{equation}\label{eq:main-explicit}
  \left\{\HUTz(x_i,t_i)\right\}_{i=1}^m
  \overset d=
  \left\{
    \sup_{y\in\R}\left\{B_{\mathrm{ts}}(2y)-2|y|+\cL(y,0;x_i,t_i)\right\}
  \right\}_{i=1}^m.
\end{equation}
The identity holds jointly with any finite collection of time-zero coordinates.
\end{theorem}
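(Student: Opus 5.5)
The plan is to realize $\HUT$ on the prelimit level through the directed landscape and pass to the limit. Write the narrow-wedge fixed point as $\KPZ(x,t)=\cL(0,0;x,t)$. Condition on the event $\{\cL(0,0;0,1)\ge L\}$ and set
\[
\HUT_L(x,t)=\cL(0,0;L^{-1}x,1+L^{-3/2}t)-\cL(0,0;0,1).
\]
By \cite{LZ25}, $\HUT_L\to\HUTz$ in finite-dimensional distributions; after the usual $1{:}2{:}3$ normalization the height centering is immaterial once we ground. For $t>0$ the metric composition law gives
\[
\HUT_L(x,t)=\sup_{y}\Bigl\{\bigl[\cL(0,0;y,1)-\cL(0,0;0,1)\bigr]+\cL(y,1;L^{-1}x,1+L^{-3/2}t)\Bigr\}.
\]
Parametrize $y=L^{-1}u$. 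By KPZ scaling invariance, $\cL(L^{-1}u,1;L^{-1}x,1+L^{-3/2}t)$ is, up to the scaling constants built into the definition of $\HUT$, equal in law to a fresh landscape $\cL'(u,0;x,t)$. By independent increments, $\cL'$ is independent of the time-one profile
\[
g_L(u)=\cL(0,0;L^{-1}u,1)-\cL(0,0;0,1),
\]
and hence also independent of the conditioning event, which is measurable with respect to times in $[0,1]$. So, conditionally, $\HUT_L(\cdot,t)=\sup_u\{g_L(u)+\cL'(u,0;\cdot,t)\}$, jointly in finitely many $(x_i,t_i)$ with $t_i>0$. The same landscape $\cL'$ serves all $t_i$, so multi-time joint laws are captured.

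Next I show that, under the conditioning, $g_L$ converges in law, uniformly on compacts, to $h_\vee(u)=B_{\mathrm{ts}}(2u)-2|u|$. This is consistent with \eqref{eq:intro-timezero}. For the locally uniform statement, rather than f.d.d. alone, I would invoke the conditional Brownian comparison of \cite{GHZ25}: near a high point the profile is locally Brownian with drift $\mp 2$, the drift coming from the slope $\sqrt{L}$ of the parabola-shifted profile after scaling. The constant is matched by the time-zero identity. By Skorokhod, couple $g_L\to h_\vee$ uniformly on compacts, independently of $\cL'$.

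The main obstacle is passing the limit inside the supremum. Compact-set convergence of $g_L$ together with continuity of $\cL'$ gives convergence of $\sup_{|u|\le M}$. It remains to show the maximizer is tight uniformly in $L$: for each $\eps>0$ there is $M$ with
\[
\Pp\bigl(\sup_{|u|>M}\{g_L(u)+\cL'(u,0;x,t)\}\ge \sup_{|u|\le M}\{\cdots\}\bigr)<\eps .
\]
For this I need an upper bound $g_L(u)\le C-c|u|$ for large $|u|$, uniformly in $L$, with high conditional probability. Under the conditioning, $\cL(0,0;y,1)$ has the large peak at $0$, and globally it follows the parabola $-y^2$ at the unscaled level. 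I would obtain the bound from the tail estimates of \cite{GHZ25}, which control the profile's deviation above a tent of slope $2$ from the peak. On the other side I would use the standard bound $\cL'(u,0;x,t)\le -(x-u)^2/t+C\log^{2/3}(2+|u|)$. The quadratic decay then beats the linear growth, so the supremum is localized. This yields convergence of the f.d.d., including $t=0$ coordinates, where the identity is just $g_L\to h_\vee$, jointly. Combined with \cite{LZ25} this proves \eqref{eq:main-explicit}.
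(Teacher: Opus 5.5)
Your outline follows the paper's proof. It uses metric composition at time $1$, independence and $1{:}2{:}3$ invariance of the rescaled post-time-$1$ landscape, and an upgrade of the f.d.d.\ limit of $g_L$ from \cite{LZ25} to compact uniform convergence via the Brownian comparison of \cite{GHZ25}. It then localizes the maximizer. One slip: your $\HUT_L$ and $g_L$ are missing the prefactor $\sqrt L$. As written $g_L\to0$, and $\cL(L^{-1}u,1;L^{-1}x,1+L^{-3/2}t)$ has the landscape law only after multiplying by $\sqrt L$.

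The step that does not go through as stated is the localization. The tent estimate of \cite{GHZ25} (Lemma~2.20 there) holds only for intervals inside $[-L^{1/2}/2,L^{1/2}/2]$ in the unscaled variable, i.e.\ for $|u|\le L^{3/2}/2$. It therefore cannot give $g_L(u)\le C-c|u|$ for all large $|u|$ uniformly in $L$. Beyond this window you must control the time-$1$ profile under a conditioning of probability $\asymp L^{-3/2}e^{-\frac43L^{3/2}}$, and unconditional bounds do not transfer for free. The global parabolic bound $|\cH(z,1)+z^2|\le C_{\mathrm{par}}+\log(2+|z|)$ survives the conditioning only at the level $C_{\mathrm{par}}\le KL$. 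This uses $\Pp(C_{\mathrm{par}}>KL)\le Ce^{-cK^{3/2}L^{3/2}}$ with $cK^{3/2}>\frac43$. It yields merely $g_L(u)\le -u^2/L^{3/2}+(K-1)L^{3/2}+\sqrt L\log(2+|u|/L)$, which is not a linear-decay bound when $|u|$ is of order $L^{3/2}$.

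The missing observation is that on $|u|\ge c_*L^{3/2}$ the independent future landscape contributes $-(u-x)^2/t\le -c_*^2L^3/(2t)$ for large $L$. This swamps the $O(L^{3/2})$ error, so you never need a linear bound on $g_L$ there. This is how the paper proceeds:
\begin{itemize}
\item on $R\le|u|\le c_*L^{3/2}$, it applies the tent estimate over dyadic annuli, giving $g_L(u)\le-\frac34|u|$ with conditional probability at least $1-Ce^{-cR}$;
\item on $|u|\ge c_*L^{3/2}$, it combines the conditioned parabolic bound with the quadratic decay of the future landscape.
\end{itemize}
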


The theorem is structural rather than asymptotic.  The upper-tail conditioning is entirely encoded in the random initial profile $h_\vee$; after time zero the evolution is the usual KPZ fixed-point dynamics.  In particular, the positive-time large-scale limit of \cite[Proposition~1.8(b)]{LZ25} follows directly from the $1{:}2{:}3$ scaling of the directed landscape.

\begin{proposition}[Positive-time large-scale limit]\label{prop:positive-longtime}
As $\lambda\to\infty$,
\begin{equation}\label{eq:positive-longtime}
  \left\{\lambda^{-1/3}\HUTz(\lambda^{2/3}x,\lambda t):(x,t)\in\R\times(0,\infty)\right\}
  \Longrightarrow
  \left\{\cL(0,0;x,t):(x,t)\in\R\times(0,\infty)\right\}
\end{equation}
in the sense of convergence of finite-dimensional distributions.
\end{proposition}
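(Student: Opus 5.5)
By \cref{thm:main} it suffices to prove the statement for the process $\KPZ(x,t;h_\vee)$ built from $B_{\mathrm{ts}}$ and an independent directed landscape $\cL$. Fix $m\ge1$ and points $(x_i,t_i)\in\R\times(0,\infty)$. By \eqref{eq:main-explicit},
\[
\lambda^{-1/3}\HUTz(\lambda^{2/3}x_i,\lambda t_i)\overset d=\lambda^{-1/3}\sup_{y\in\R}\bigl\{B_{\mathrm{ts}}(2y)-2|y|+\cL(y,0;\lambda^{2/3}x_i,\lambda t_i)\bigr\},
\]
jointly in $i$. Substitute $y=\lambda^{2/3}z$ and use the $1{:}2{:}3$ scaling invariance of the directed landscape,
\[
\{\lambda^{-1/3}\cL(\lambda^{2/3}z,0;\lambda^{2/3}x,\lambda t)\}\overset d=\{\cL_\lambda(z,0;x,t)\}
\]
with $\cL_\lambda\overset d=\cL$ independent of $B_{\mathrm{ts}}$. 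Brownian scaling gives $\lambda^{-1/3}B_{\mathrm{ts}}(2\lambda^{2/3}z)=\tilde B(2z)$ with $\tilde B$ again a two-sided Brownian motion. The rescaled vector therefore has the same joint law as
\[
\Bigl\{\sup_{z\in\R}\bigl\{\tilde B(2z)-2\lambda^{1/3}|z|+\cL(z,0;x_i,t_i)\bigr\}\Bigr\}_{i=1}^m,
\]
which is $\KPZ(x_i,t_i;h^{(\lambda)})$ with $h^{(\lambda)}(z)=\tilde B(2z)-2\lambda^{1/3}|z|$, all built from one fixed pair $(\tilde B,\cL)$.

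It remains to show that these suprema converge almost surely to $\cL(0,0;x_i,t_i)$, since almost sure convergence of each coordinate gives joint convergence in distribution.

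\textbf{Lower bound.} Take $z=0$. Then $\tilde B(0)=0$, so $\KPZ(x_i,t_i;h^{(\lambda)})\ge\cL(0,0;x_i,t_i)$ for every $\lambda$.

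\textbf{Upper bound.} Fix $i$. We use the standard global bound $|\cL(z,0;x,t)+(z-x)^2/t|\le C(1+\log^{2/3}(2+|z|))$ for $(x,t)$ in the given finite set, with $C$ an almost surely finite random constant (Dauvergne--Ortmann--Vir\'ag). We also use $|\tilde B(2z)|\le C'(1+|z|)$, which holds by the law of the iterated logarithm.

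\begin{itemize}
\item[(a)] For $|z|\ge1$, the penalty $-2\lambda^{1/3}|z|$ dominates these linear and logarithmic terms once $\lambda$ is large. So the supremum over $|z|\ge1$ tends to $-\infty$, and in particular falls below $\cL(0,0;x_i,t_i)$.
\item[(b)] For $|z|\le\delta\le1$, the integrand is at most
\[
\sup_{|z|\le\delta}\tilde B(2z)+\sup_{|z|\le\delta}\cL(z,0;x_i,t_i).
\]
By continuity of $\tilde B$ and of $\cL$, this tends to $\cL(0,0;x_i,t_i)$ as $\delta\to0$.
\item[(c)] For $\delta\le|z|\le1$, the integrand is at most $\sup_{|z|\le1}\bigl(\tilde B(2z)+\cL(z,0;x_i,t_i)\bigr)-2\lambda^{1/3}\delta$. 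This is below $\cL(0,0;x_i,t_i)$ for $\lambda$ large.
\end{itemize}

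Hence $\limsup_{\lambda\to\infty}\KPZ(x_i,t_i;h^{(\lambda)})\le\cL(0,0;x_i,t_i)+o_\delta(1)$, and letting $\delta\to0$ gives the matching upper bound.

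The only mildly delicate step is the tail control in (a). It rests on the known almost sure shape bound for $\cL$ and on the linear growth of Brownian motion, and the $\lambda^{1/3}$ penalty makes it straightforward.
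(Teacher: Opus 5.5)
Your proposal is correct and follows essentially the same route as the paper: you reduce via \cref{thm:main}, $1{:}2{:}3$ scaling and Brownian scaling to $\sup_z\{\tilde B(2z)-2\lambda^{1/3}|z|+\cL(z,0;x_i,t_i)\}$ for one fixed pair $(\tilde B,\cL)$, bound it below by the value at $z=0$, and bound it above by sending the region $|z|\ge\delta$ to $-\infty$ and using continuity near $z=0$. The paper merges your regions (a) and (c) into the single observation that $\sup_z\{B_{\mathrm{ts}}(2z)+\cL(z,0;x,t)-|z|\}<\infty$ almost surely, and the exact power of the logarithm in the Dauvergne--Ortmann--Vir\'ag bound is immaterial, since any sublinear growth bound suffices.
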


We next turn to finite-dimensional distributions involving negative times. Unlike the positive-time sector, these distributions cannot be obtained from an independent directed landscape evolving backward from the distinguished time. Instead, the skew-shift symmetry of the upper-tail field allows the configuration to be re-centered at its earliest observation point. After this re-centering, all observation times are nonnegative and the resulting law is an exponential tilt of the positive-time KPZ fixed point.

Recall the total order $\prec$ from \cite[Definition~1.12]{LZ25}. For $(x,t),(x',t')\in\R^2$, we write $(x,t)\prec(x',t')$ if $t<t'$, or if $t=t'$ and $x<x'$. Let $z_i=(x_i,t_i)$, $1\le i\le m$, be distinct points, none equal to $(0,0)$, and let $z_k=(x_k,t_k)$ be the $\prec$-minimal point. Assume $t_k<0$ and set $s_k=-t_k>0$. Define
\begin{equation}\label{eq:Vk}
	V_k=\KPZ(-x_k,s_k;h_\vee).
\end{equation}
We will prove in \cref{sec:negative} that
$\E_{\Pp}[e^{2V_k}]=e^{2s_k/3}$. Hence
\begin{equation}\label{eq:Qk}
	\frac{d\mathbb Q_k}{d\Pp}
	=
	\exp\left\{
	2V_k-\frac23s_k
	\right\}
\end{equation}
defines a probability measure.

\begin{theorem}[All-time representation]\label{thm:alltime}
	With the notation above,
	\begin{equation}\label{eq:alltime-representation}
		\Law\left(
		\left\{\HUTz(z_i)\right\}_{i=1}^m
		\right)
		=
		\Law_{\mathbb Q_k}\left(
		\left\{
		\KPZ(x_i-x_k, t_i-t_k;h_\vee)-V_k
		\right\}_{i=1}^m
		\right).
	\end{equation}
\end{theorem}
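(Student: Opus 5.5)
We will derive the all-time representation from the skew-shift symmetry of Liu and Zhang \cite{LZ25}, combined with \cref{thm:main}. The symmetry lets us move the observation points so that they all sit at nonnegative times; the price is an explicit exponential Radon--Nikodym factor.

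\textbf{Step 1: the shift identity.} We first recall the form of the skew-shift symmetry for the ungrounded field $\HUT$ (cf. the shift statements accompanying \cite[Definition~1.12]{LZ25}). For a point $z_k=(x_k,t_k)$, the recentred field
\[
\HUT(\cdot+x_k,\cdot+t_k)-\HUT(z_k)
\]
has the law of $\HUTz$ tilted by $\exp\{2\HUTz(-x_k,-t_k)+\tfrac23 t_k\}$. The weight $e^{2h}$ comes from the upper-tail asymptotics $\Pp(\KPZ>L)\approx e^{-\frac43 L^{3/2}}$. Differentiating the exponent at scale $L$ gives $2L^{1/2}$ per unit height, and this becomes $2$ after the $L^{-1/2}$ height scaling. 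The time correction $\tfrac23 s_k$ comes from the matching expansion in time.

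We will state this as an identity of finite-dimensional distributions, joint in the observation points. Since $z_k$ is $\prec$-minimal, all shifted points $z_i-z_k$ have nonnegative time. The point $(0,0)$ is sent to $(-x_k,s_k)$, which also has positive time. If $t_i=t_k$, then $x_i>x_k$, so these points land on the time-zero line, where \cref{thm:main} still applies jointly.

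\textbf{Step 2: apply \cref{thm:main} to the shifted points.} By \cref{thm:main}, applied jointly with the time-zero coordinates, we have
\[
\bigl(\HUTz(z_i-z_k)\bigr)_i,\ \HUTz(-x_k,s_k) \;\overset d=\; \bigl(\KPZ(x_i-x_k,t_i-t_k;h_\vee)\bigr)_i,\ V_k .
\]
Grounding then gives
\[
\HUTz(z_i)=\HUT(z_i)-\HUT(0,0)=\bigl[\HUT(z_i)-\HUT(z_k)\bigr]-\bigl[\HUT(0,0)-\HUT(z_k)\bigr].
\]
Under the shift, this is $\HUTz(z_i-z_k)-\HUTz(-x_k,s_k)$. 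Under the tilt of Step 1, this quantity has exactly the $\mathbb Q_k$-law of $\KPZ(x_i-x_k,t_i-t_k;h_\vee)-V_k$, which is \eqref{eq:alltime-representation}.

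\textbf{Step 3: normalization.} We must show that the tilt factor has expectation one, that is, $\E[e^{2V_k}]=e^{2s_k/3}$. Here is the plan.
\begin{itemize}
\item Write $V_k=\sup_y\{B_{\mathrm{ts}}(2y)-2|y|+\cL(y,0;-x_k,s_k)\}$.
\item Use the stationarity of two-sided Brownian motion under the KPZ fixed point, in the form $\E e^{2B(2y)}=e^{4y}$, which balances the drift $-2|y|$ only marginally.
\item Alternatively, and more robustly, obtain the identity as a consequence of Step 1: apply the symmetry with no observation points and use total mass one.
\end{itemize}
We prefer the second route, since it avoids delicate integrability. It does require knowing that the symmetry in \cite{LZ25} is exact rather than only a formal identity; uniform integrability of $e^{2V_k}$ may be needed to pass the identity through the $L\to\infty$ limit.

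\textbf{Main obstacle.} The main difficulty is Step 1 if the skew-shift symmetry is only available at the level of the prelimit. In that case we will rederive it at finite $L$, using two inputs:
\begin{itemize}
\item Under the conditioning $\KPZ(0,1)>L$, the translation invariance and shear invariance of $\cL$ move the conditioned point.
\item The ratio of upper-tail probabilities $\Pp(\KPZ(z)>L+L^{-1/2}h)/\Pp(\KPZ(0,1)>L)$ converges to $e^{2h+\dots}$ by the Tracy--Widom asymptotics.
\end{itemize}
This has to be combined with tightness and uniform integrability, which the GHZ Brownian comparison estimates \cite{GHZ25} should provide.
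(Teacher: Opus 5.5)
\textbf{Verdict: there is a genuine gap.} Your Step~2 reduction is sound. If Step~1 holds, then \cref{thm:main}, applied jointly with the time-zero coordinates and with the tilt depending only on the coordinate $\HUTz(-x_k,s_k)$, gives \eqref{eq:alltime-representation}. The gap is that Step~1 cannot be \emph{recalled}. It is a correct statement, but \cite{LZ25} does not provide it in this form. What \cite[Proposition~2.8]{LZ25} gives, as used in \eqref{eq:LZ-shift-T}, is an identity for the joint tail function $T$ of the \emph{ungrounded} field $\HUT$, with a \emph{deterministic} factor $e^{2\widehat t/3-2\widehat\beta}$ and shifted thresholds. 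Your Step~1 is a law identity for the \emph{grounded} field with a \emph{random} density $e^{2\HUTz(-x_k,-t_k)+\frac23t_k}$. Given \cref{thm:main}, Step~1 at the points $\{z_i-z_k\}\cup\{-z_k\}$ is equivalent to \eqref{eq:alltime-representation}, since the $i=k$ coordinate recovers $\HUTz(-z_k)$. So assuming it assumes the theorem. Your Tracy--Widom heuristic explains the scalar factor in the shift identity, not how that factor becomes a random Radon--Nikodym weight.

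\textbf{The missing idea} is the mechanism that produces $e^{2V_k}$. The paper proceeds as follows.
\begin{enumerate}
\item It inserts the origin with threshold $b$ and expresses the grounded tail as $-\frac12\partial_bT|_{b=0}$ via \eqref{eq:LZ-grounded-tail}.
\item It shifts with $(\widehat x,\widehat t,\widehat\beta)=(x_k,t_k,a_k)$. This sends $z_k$ to $(0,0)$ with threshold $0$, and sends the origin to $(-x_k,s_k)$ with threshold $b-a_k$.
\item It uses \cref{thm:main} together with $\HUT=E+\HUTz$, where $E\sim\operatorname{Exp}(2)$ is independent of $\HUTz$ \cite[Proposition~1.5(a),(c)]{LZ25}. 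This writes the shifted tail as $\E\bigl[e^{-2\max\{C,s-V_k\}}\bigr]$.
\item It differentiates in $s$ (\cref{lem:exp-diff}). This is exactly where $e^{2V_k}\1_{\{V_k+C<s\}}$ appears.
\end{enumerate}
Multiplying by $e^{2t_k/3-2a_k}$ gives \cref{lem:skew-transfer}. Null boundary events \cite[Remark~2.11]{LZ25} handle strict versus non-strict inequalities, and orthant probabilities then determine the law.

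\textbf{Normalization (Step~3).} The normalization comes from letting all $a_i\to-\infty$ and using monotone convergence. This is your preferred route, but it is only available once the tail identity with arbitrary thresholds has been proved. In your formulation, ``total mass one'' is built into the claim of Step~1 rather than derived. Your first route in Step~3 does not work: $\E e^{2B(2y)}=e^{4|y|}$ only gives $\E e^{2h_\vee(y)}=1$ at a fixed $y$, and says nothing about the supremum $V_k$.

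\textbf{The stated ``main obstacle'' is misplaced.} The shift identity in \cite{LZ25} is exact for the limiting field, so no finite-$L$ rederivation, tightness, or uniform-integrability argument is needed. The real work is the conversion from the ungrounded tail identity to the grounded tilted law described above.
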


Thus the negative-time sector is not obtained by running the KPZ fixed point backward.  The earliest observation is shifted to time zero and the original grounding point is moved to positive time, producing the exponential tilt \eqref{eq:Qk}.  The measure $\mathbb Q_k$ acts only on the Brownian initial profile and the directed landscape on the right-hand side of \eqref{eq:alltime-representation}.

As a second verification of the representation, we recover the negative-time large-scale limit of \cite[Proposition~1.8(a)]{LZ25} without using the asymptotics of the Liu--Zhang multipoint formula.

\begin{proposition}[Negative-time large-scale limit]\label{prop:negative-longtime}
As $\lambda\to\infty$,
\begin{equation}\label{eq:negative-longtime}
\begin{aligned}
&\left\{
  \frac{\HUTz(\lambda^{1/2}x/\sqrt2,\lambda t)-\lambda t}{\sqrt{2\lambda}}
  :(x,t)\in\R\times(-\infty,0)
\right\}
\\
&\qquad\Longrightarrow
\left\{
  \min\left\{B_1(-t)+x,B_2(-t)-x\right\}
  :(x,t)\in\R\times(-\infty,0)
\right\},
\end{aligned}
\end{equation}
in the sense of convergence of finite-dimensional distributions, where $B_1$ and $B_2$ are independent standard Brownian motions.
\end{proposition}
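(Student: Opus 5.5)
We derive the limit from \cref{thm:alltime}. Fix negative times \(t_1,\dots,t_m<0\) and spatial points \(x_i\). After rescaling, the observation points are \(z_i^\lambda=(\lambda^{1/2}x_i/\sqrt2,\lambda t_i)\), and the \(\prec\)-minimal one is \(z_k^\lambda\) with \(s_k^\lambda=-\lambda t_k\). By \cref{thm:alltime}, the rescaled vector equals in law, under \(\mathbb Q_k\), the vector
\[
\frac{\KPZ(x_i^\lambda-x_k^\lambda,\,\lambda(t_i-t_k);h_\vee)-V_k^\lambda-\lambda t_i}{\sqrt{2\lambda}},
\qquad V_k^\lambda=\KPZ(-x_k^\lambda,s_k^\lambda;h_\vee).
\]
The grounding point \((0,0)\) moves to the positive time \(s_k^\lambda\). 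The plan is to understand the joint law of \(V_k^\lambda\) and of the KPZ values at the times \(\lambda(t_i-t_k)\le s_k^\lambda\) under the tilt \(e^{2V_k^\lambda-2s_k^\lambda/3}\).

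\textbf{Step 1: the tilt forces an upper-tail event.} The identity \(\E_{\Pp}[e^{2V_k}]=e^{2s_k/3}\) shows that under \(\mathbb Q_k\) the variable \(V_k^\lambda\) is of order \(s_k^\lambda\), not of the fluctuation order \((s_k^\lambda)^{1/3}\). This follows from the Tracy--Widom-type tail \(\Pp(V>a)\approx e^{-\frac43 a^{3/2}s^{-1/2}}\), which we get from the one-point law of \(\KPZ(\cdot;h_\vee)\) by comparison with narrow wedge. Optimizing \(2a-\frac43a^{3/2}s^{-1/2}\) over \(a\) gives the maximizer \(a=s\). So under \(\mathbb Q_k\) we have \(V_k^\lambda=s_k^\lambda+O(\lambda^{1/2})\), and a Laplace-method computation should give the Gaussian correction. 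Concretely, \(V_k^\lambda\) has a Brownian-increment-type law on the \(\sqrt\lambda\) scale.

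\textbf{Step 2: the geometry under the tilt.} Conditional on \(V_k^\lambda\approx s\), the geodesic from the maximizer of the initial profile to \((-x_k^\lambda,s_k^\lambda)\) is, at large height, close to a straight line. By the upper-tail Brownian-bridge results of \cite{GHZ25}, the landscape profiles \(\cL(y,0;\cdot,r)\) along intermediate times look like a parabola-free Brownian field around the geodesic, with slopes \(\pm\) tied to the height via \(\partial_x\) of the limit shape. We then write
\[
\KPZ(x_i^\lambda-x_k^\lambda,\lambda(t_i-t_k))-V_k^\lambda\approx \lambda t_i+\min\bigl\{\text{left-arm increment},\ \text{right-arm increment}\bigr\}.
\]
The two arms are the two ways to combine a path to the observation point with the remaining path to the grounding point. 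This mirrors the \(V\)-shape of \eqref{eq:intro-timezero} at time zero, with drift \(2|x|\) becoming the \(\pm x\) terms after the \(\sqrt2\) scaling. We identify the left and right arm increments, on the scale \(\sqrt{2\lambda}\), with \(B_1(-t_i)+x_i\) and \(B_2(-t_i)-x_i\). Their independence comes from the independence of the landscape on the two sides of the geodesic in the Brownian limit, as in \cite{GHZ25}.

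\textbf{Main obstacle.} The hard step is Step 2. It requires a multipoint, quantitative version of the local Brownian comparison under a tilted (rather than conditioned) measure, uniform over time ranges of order \(\lambda\). We would try to reduce the tilt to conditioning by disintegrating over \(V_k^\lambda=s+u\sqrt\lambda\), with \(u\) in compacts; Step 1 controls the mass outside these compacts. On each slice we would apply the conditional Brownian comparison. Tightness and the Gaussian limits then combine via dominated convergence to give \eqref{eq:negative-longtime}.
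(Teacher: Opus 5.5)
\textbf{There is a genuine gap.} You misidentify where the two independent Brownian motions come from, and the one concrete claim you make about the tilted law is false.

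\textbf{Step 1 contradicts the statement.} Since $\KPZ(0,0;h_\vee)=h_\vee(0)=0$, the $k$-th coordinate in \cref{thm:alltime} is $-V_k^\lambda$. So under $\mathbb Q_k$, $(s_k^\lambda-V_k^\lambda)/\sqrt{2\lambda}$ is the rescaled $\HUTz(z_k^\lambda)$, and it must converge to $\min\{B_1(-t_k)+x_k,B_2(-t_k)-x_k\}$. That limit is not Gaussian, so your Gaussian correction for $V_k^\lambda$ cannot hold. The logarithmic tail $e^{-\frac43a^{3/2}s^{-1/2}}$ only gives $V_k^\lambda/s_k^\lambda\to1$. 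The law on the $\sqrt\lambda$ scale is instead governed by the random maximizer $y_0$. Under the tilt, $y_0$ is spread over distances of order $\sqrt\lambda$ and collects the Cameron--Martin gain $h_\vee(y_0)\approx 2|y_0|$ of \cref{lem:neg-Brownian-tilt}. Heuristically $V_k^\lambda\approx s_k^\lambda+\sqrt{s_k^\lambda}\,N+2|y_0|$, with $N$ standard Gaussian and $y_0+x_k^\lambda$ Gaussian of variance $s_k^\lambda/4$; this already produces the $\min$.

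\textbf{Step 2 misses the same effect.} The geodesic is not close to a straight line on the relevant scale. Its position $y_j$ at an intermediate time satisfies $\sqrt2\,y_j/\sqrt\lambda\Rightarrow G_-$, a Gaussian process with covariance $\frac12\min\{r,r'\}$. Conversely, the Brownian fluctuations of the landscape on the two sides of the geodesic are of order $\lambda^{1/4}$ over distances $\sqrt\lambda$. They vanish after division by $\sqrt{2\lambda}$, so they cannot be the source of $B_1,B_2$.

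What survives is the following. The profile at the observation time is, up to $o(\sqrt\lambda)$, a deterministic slope-$2$ tent centered at $y_j$. The remaining passage time to the grounding point has an independent Gaussian fluctuation $G_+$ with the same covariance as $G_-$. The limit is $G_+-|x-G_-|=\min\{B_1+x,B_2-x\}$ with $B_1=G_+-G_-$ and $B_2=G_++G_-$. Their independence is the fact that $G_\pm$ are independent with equal variances, not a left/right independence of the landscape. With a straight geodesic you would get a single Brownian motion.

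\textbf{The technical plan inherits this problem.} Disintegrating over $V_k^\lambda$ and applying the point-to-point comparison of \cite{GHZ25} slice by slice gives no handle on where the maximizing path passes. That location is exactly the second Gaussian component. Moreover, \cite{GHZ25} does not apply directly to a supremum over a Brownian initial profile.

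The missing idea (\cref{lem:neg-chain-replacement}) is to adjoin an earlier point and replace the weight $e^{2V}$ by the chain partition function $\mathcal Z_\lambda=\int W_\lambda(y)\,dy$, over a finite chain $y_0,\ldots,y_n$ placed at the observation times. Because $\E_{\Pp}[e^{2h_\vee(y)}]=1$ and $\E_{\Pp}[e^{2\cL(y,s;y',s')}]=e^{-2(y'-y)^2/(s'-s)}\E[e^{2(s'-s)^{1/3}\chi}]$, the resulting measure $\widehat{\Pp}_\lambda$ is explicit:
\begin{itemize}
\item independent Gaussian chain increments of variance $\Delta_\ell/4$, which give $G_-$;
\item independent exponentially tilted GUE slab heights, which give $G_+$ by \cref{lem:neg-tilted-GUE};
\item a Cameron--Martin tilted initial profile.
\end{itemize}
The comparison of \cite{GHZ25} is then needed only for two things. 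First, each observed value equals the chain value minus the tent loss $2|\xi-y_j|$, up to $o(\sqrt\lambda)$. Second, the normalized residual $e^{2V}/\mathcal Z_\lambda$ decouples from the coarse variables and is uniformly integrable.
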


\subsection{Proof strategy and organization}

The proof of \cref{thm:main} starts from the finite-$L$ variational
decomposition in \cref{lem:finiteL}.  Under the upper-tail conditioning,
\cite[Theorem~1.1 and Proposition~1.5(d)]{LZ25} identify the
finite-dimensional limit of the rescaled profile at the conditioned
time as $h_\vee$, while \cref{lem:finiteL} shows that the rescaled
future landscape is independent of this profile and has exactly the
directed-landscape law.  Finite-dimensional convergence of the initial
profile is not sufficient to pass directly through the variational
supremum.  The local oscillation estimate in \cref{prop:II} upgrades
the convergence to the compact uniform topology, which gives
\cref{cor:profile-compact}.  The localization estimate in
\cref{prop:III} then shows that the maximizing point remains in a fixed
compact set with probability tending to one.  These two estimates rule
out, respectively, narrow fluctuations between deterministic
observation points and escape of the maximizer to spatial infinity.
Combining \cref{lem:finiteL,cor:profile-compact,prop:III} with the
variational formula for the KPZ fixed point proves \cref{thm:main}.
The same representation and the $1{:}2{:}3$ scaling of the directed
landscape yield \cref{prop:positive-longtime}.

The proof of \cref{thm:alltime} uses the shift structure of the
upper-tail field established in \cite{LZ25}.  The simultaneous shift
identity \cite[Proposition~2.8]{LZ25} moves the earliest observation
point to the origin, so that all shifted observation times are
nonnegative.  The derivative formula
\cite[Proposition~2.12]{LZ25} for the grounded field is combined with
the elementary exponential differentiation argument preceding
\cref{lem:skew-transfer}.  This converts the derivative in the shifted
threshold into the exponential weight appearing in \eqref{eq:Qk}.
The resulting tail identity is stated in \cref{lem:skew-transfer}.
Applying the positive-time representation of \cref{thm:main} to the
shifted configuration and normalizing the exponential weight then
gives the all-time representation in \cref{thm:alltime}.

The proof of \cref{prop:negative-longtime} begins from this tilted
representation.  Two elementary consequences of the exponential tilt
are isolated first.  Lemma~\ref{lem:neg-Brownian-tilt} describes the
Brownian initial profile under the spatial tilt, while
\cref{lem:neg-tilted-GUE} shows that a one-point directed-landscape
height under the corresponding exponential tilt has Gaussian
fluctuations on the scale $\lambda^{1/2}$.  The main step is the
finite-chain replacement in \cref{lem:neg-chain-replacement}, which
uses the local Brownian comparison of
\cite[Proposition~3.5]{GHZ25}

\section{Preliminaries}\label{sec:objects}

\subsection{The directed landscape and the KPZ fixed point}

The directed landscape $\cL$ is a random continuous function on \(\R_{\uparrow}^4=\left\{(x,s;y,t)\in\R^4 \,:\, s<t\right\},\)
which arises as the universal scaling limit of directed last-passage geometries in the KPZ universality class.  It is characterized by independent increments on disjoint time intervals, the metric composition law, and Airy-sheet marginals on each fixed time interval.  We refer to \cite{DOV22} for its construction and basic properties.

For an admissible initial profile $h_0:\R\to\R\cup\left\{-\infty\right\}$, the KPZ fixed point started from $h_0$ can be realized through the directed landscape by
\begin{equation}\label{eq:kpz-variational-prelim}
	\KPZ(x,t;h_0)
	=
	\sup_{y\in\R}
	\left\{
		h_0(y)+\cL(y,0;x,t)
		\right\},
	\qquad t>0.
\end{equation}
For narrow-wedge initial data $h_{\mathrm{nw}}$, defined by $h_{\mathrm{nw}}(0)=0$ and $h_{\mathrm{nw}}(x)=-\infty$ for $x\ne0$, this reduces to
\begin{equation}\label{eq:narrow-wedge}
	\cH(x,t)
	=
	\KPZ(x,t;h_{\mathrm{nw}})
	=
	\cL(0,0;x,t).
\end{equation}

We will use several standard properties of the directed landscape throughout the paper and we refer to \cite{DOV22} for further details.

\begin{itemize}
	
	\item \emph{Metric composition.}
	Almost surely, for every $s<r<t$ and $x,y\in\R$,
	\begin{equation}\label{eq:metric-composition}
		\cL(x,s;y,t)
		=
		\sup_{z\in\R}
		\left\{
		\cL(x,s;z,r)+\cL(z,r;y,t)
		\right\}.
	\end{equation}
	
	\item \emph{Independent increments.}
	If $(s_i,t_i)$, $1\le i\le n$, are pairwise disjoint time intervals, then
	the random continuous functions
	\[
	\left\{
	(x,y)\longmapsto \cL(x,s_i;y,t_i)
	\right\}_{i=1}^n
	\]
	are independent.
	
	\item \emph{Space-time translation invariance.}
	For every $\alpha,\eta\in\R$,
	\begin{equation}\label{eq:landscape-translation}
		\left\{
		\cL(x+\alpha,s+\eta;y+\alpha,t+\eta)
		\right\}_{x,y\in\R,\ s<t}
		\overset d=
		\left\{
		\cL(x,s;y,t)
		\right\}_{x,y\in\R,\ s<t}.
	\end{equation}
	
	\item \emph{$1{:}2{:}3$ scaling.}
	For every $a>0$,
	\begin{equation}\label{eq:landscape-scaling}
		\left\{
		a^{-1}\cL(a^2x,a^3s;a^2y,a^3t)
		\right\}_{x,y\in\R,\ s<t}
		\overset d=
		\left\{
		\cL(x,s;y,t)
		\right\}_{x,y\in\R,\ s<t}.
	\end{equation}
	
	\item \emph{One-point distribution.}
	For every $x,y\in\R$ and $s<t$,
	\begin{equation}\label{eq:landscape-one-point}
		\cL(x,s;y,t)
		\overset d=
		(t-s)^{1/3}\chi-\frac{(x-y)^2}{t-s},
	\end{equation}
	where \(\chi\) is a Tracy-Widom GUE random variable.
	
\item \emph{Global parabolic bound.}
For every fixed $s<t$ and every $\varepsilon>0$, there exists an almost surely finite random constant $C=C(s,t,\varepsilon)$ such that
\begin{equation}\label{eq:global-parabolic}
	\left|
	\cL(x,s;y,t)
	+\frac{(x-y)^2}{t-s}
	\right|
	\le
	C\left(1+|x|+|y|\right)^\varepsilon
\end{equation}
for all $x,y\in\R$.  This follows from \cite[Corollary~10.7]{DOV22}.
\end{itemize}

We will also use the standard upper-tail asymptotics of the GUE Tracy--Widom distribution, see \cite[(3.10)]{LZ25}.  As $L\to\infty$,
\begin{equation}\label{eq:upper-tail-GUE}
	1-F_{\rm GUE}(L)
	=
	\frac{e^{-\frac43L^{3/2}}}{16\pi L^{3/2}}
	\left(1+O(L^{-3/2})\right),
	\qquad
	F_{\rm GUE}'(L)
	=
	\frac{e^{-\frac43L^{3/2}}}{8\pi L}
	\left(1+O(L^{-3/2})\right).
\end{equation}

\subsection{The upper-tail field}

For $L>0$, let
\begin{equation}\label{eq:HL-definition}
  H_L(x,t)=\sqrt L\,[\cH(L^{-1}x,1+L^{-3/2}t)-L],
  \qquad
  A_L=\left\{\cH(0,1)\ge L\right\},
\end{equation}
whenever $1+L^{-3/2}t>0$.  By \cite[Theorem~1.1]{LZ25}, under $\Pp(\,\cdot\mid A_L)$,
\begin{equation}\label{eq:HUT-definition}
  \left\{H_L(x,t):(x,t)\in\R^2\right\}
  \Longrightarrow
  \left\{\HUT(x,t):(x,t)\in\R^2\right\}
\end{equation}
in finite-dimensional distributions.  We write
\begin{equation}\label{eq:HUT-grounded}
  \HUTz(x,t)=\HUT(x,t)-\HUT(0,0).
\end{equation}
By \cite[Proposition~1.5(a),(c),(d)]{LZ25}, $\HUT(0,0)$ is exponential with parameter $2$, is independent of $\HUTz$, and \eqref{eq:intro-timezero} holds.  Throughout the paper, $\Pr$ denotes probability under the intrinsic law of the upper-tail field, while $\Pp$ is the probability measure carrying $B_{\mathrm{ts}}$ and the directed landscape.

\section{Positive-time representation}\label{sec:positiveproofs}

Set
\begin{equation}\label{eq:gL}
  g_L(y)=\sqrt L\,[\cH(L^{-1}y,1)-\cH(0,1)]
\end{equation}
and, for $0\le s<t$,
\begin{equation}\label{eq:futurelandscape}
  \cL_L^+(y,s;x,t)
  =\sqrt L\,\cL(L^{-1}y,1+L^{-3/2}s;L^{-1}x,1+L^{-3/2}t).
\end{equation}
By \eqref{eq:HUT-definition}, \eqref{eq:HUT-grounded}, and \eqref{eq:intro-timezero}, for every fixed $y_1,\ldots,y_m$,
\begin{equation}\label{eq:gL-fdd}
  \left\{g_L(y_i)\right\}_{i=1}^m
  \Longrightarrow
  \left\{h_\vee(y_i)\right\}_{i=1}^m
\end{equation}
under $\Pp(\,\cdot\mid A_L)$.

The future evolution admits an exact decomposition.

\begin{lemma}[Finite-$L$ decomposition]\label{lem:finiteL}
For $t>0$,
\begin{equation}\label{eq:exact-var}
  \sqrt L\,[\cH(L^{-1}x,1+L^{-3/2}t)-\cH(0,1)]
  =\sup_{y\in\R}\left\{g_L(y)+\cL_L^+(y,0;x,t)\right\}.
\end{equation}
Moreover, \(\cL_L^+\overset d=\cL,\) and $\cL_L^+$ is independent of $(g_L,A_L)$.
\end{lemma}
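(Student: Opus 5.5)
The proof has three parts: the variational identity (via metric composition at time $1$), the law of $\cL_L^+$ (via translation invariance and $1{:}2{:}3$ scaling), and the independence (via independent increments).

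\emph{Variational identity.} Fix $t>0$. Since $\cH(x,r)=\cL(0,0;x,r)$, apply the metric composition law \eqref{eq:metric-composition} with $s=0$, $r=1$, and final time $1+L^{-3/2}t$:
\[
\cH(L^{-1}x,1+L^{-3/2}t)=\sup_{z\in\R}\left\{\cH(z,1)+\cL(z,1;L^{-1}x,1+L^{-3/2}t)\right\}.
\]
Substitute $z=L^{-1}y$, subtract $\cH(0,1)$ from both sides (it is a constant inside the supremum), and multiply by $\sqrt L$. The right side becomes $\sup_y\{g_L(y)+\cL_L^+(y,0;x,t)\}$ by \eqref{eq:gL} and \eqref{eq:futurelandscape}. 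This gives \eqref{eq:exact-var}. Because the composition law holds almost surely simultaneously for all points, the identity holds for all $x,t$ at once.

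\emph{Law of $\cL_L^+$.} First use translation invariance \eqref{eq:landscape-translation} with $\eta=1$, $\alpha=0$ to shift time $1$ to time $0$. The shifted field $\cL(\cdot,1+\cdot;\cdot,1+\cdot)$, restricted to times $\ge 0$, has the law of $\cL$. Next apply \eqref{eq:landscape-scaling} with $a=L^{-1/2}$. Then $a^{-1}=\sqrt L$, $a^2=L^{-1}$ and $a^3=L^{-3/2}$, which are exactly the factors in \eqref{eq:futurelandscape}. Hence $\cL_L^+\overset d=\cL$ on $\{0\le s<t\}$. Here $\cL_L^+$ is only defined for $s\ge0$, so the claim should be read as equality of laws on that domain; this is the domain used later.

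\emph{Independence.} The pair $(g_L,A_L)$ is a measurable function of $\{\cL(0,0;z,1)\}_{z\in\R}$, i.e.\ of the landscape on the time interval $[0,1]$. The field $\cL_L^+$ is a function of the landscape restricted to times in $[1,\infty)$. This is the one step needing care, because independent increments is stated for finitely many disjoint intervals, and those intervals share the endpoint $1$. My plan is as follows.

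For each $T>1$, independent increments applied to $(0,1)$ and $(1,T)$ gives independence of the two random continuous functions $(x,y)\mapsto\cL(x,0;y,1)$ and $(x,y)\mapsto\cL(x,1;y,T)$.

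To cover all intermediate times $1\le s<t\le T$, I use metric composition again: $\cL(x,s;y,t)$ for $1\le s<t\le T$ is determined by the increments over a partition of $[1,T]$. Concretely, the fields over the intervals $(1,1+\delta),(1+\delta,1+2\delta),\dots$ are mutually independent of the $[0,1]$ field by independent increments for finitely many intervals. Taking these fields together along a dense countable set of partition times, and using continuity of $\cL$, generates the $\sigma$-algebra of $\cL|_{[1,T]}$.

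A $\pi$-$\lambda$ argument then extends independence from finite-dimensional cylinder sets to the full $\sigma$-algebras. Finally, let $T\to\infty$.

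This completes the lemma.
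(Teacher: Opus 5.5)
Your proposal is correct and follows the paper's proof: metric composition at time $1$ gives the variational identity, translation invariance plus $1{:}2{:}3$ scaling with $a=L^{-1/2}$ gives the law of $\cL_L^+$, and independent increments gives the independence. Your partition-and-$\pi$-$\lambda$ argument makes rigorous a step the paper states in one line, namely that the entire landscape after time $1$ (not just the two-parameter fields $\cL(\cdot,1;\cdot,T)$) is independent of the field on $[0,1]$.
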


\begin{proof}
	By the metric composition law \eqref{eq:metric-composition} at time $1$,
	\[
	\cH(L^{-1}x,1+L^{-3/2}t)
	=
	\sup_{z\in\R}
	\left\{
	\cH(z,1)
	+
	\cL(z,1;L^{-1}x,1+L^{-3/2}t)
	\right\}.
	\]
	Subtracting $\cH(0,1)$, setting $z=L^{-1}y$, and multiplying by
	$\sqrt L$ gives \eqref{eq:exact-var}.
	
	By the space-time translation invariance \eqref{eq:landscape-translation}
	and the $1{:}2{:}3$ scaling \eqref{eq:landscape-scaling}, yields \(\cL_L^+\overset d=\cL.\) Finally, $g_L$ and $A_L$ are measurable with respect to the directed landscape up to time $1$, while $\cL_L^+$ depends only on increments after
	time $1$.  The independent-increment property therefore implies that $\cL_L^+$ is independent of $(g_L,A_L)$.
\end{proof}

The convergence \eqref{eq:gL-fdd} cannot be inserted directly into \eqref{eq:exact-var}.  A variational supremum depends on the full spatial profile, whereas finite-dimensional convergence does not exclude narrow peaks between the observation points, and local uniform convergence alone does not prevent the maximizer from escaping to infinity. We therefore prove, respectively, compact tightness and localization.

For $R<\infty$, let
\begin{equation}
	\omega_R(f,\delta)=\sup\left\{|f(y)-f(z)|:y,z\in[-R,R],\ |y-z|\le\delta\right\}.
\end{equation}

\begin{proposition}[Local oscillation]\label{prop:II}
For every $R<\infty$ and $\eps>0$,
\begin{equation}\label{eq:II}
  \lim_{\delta\downarrow0}\limsup_{L\to\infty}
  \Pp\left(\omega_R(g_L,\delta)>\eps\mid A_L\right)=0.
\end{equation}
\end{proposition}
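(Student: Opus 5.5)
The plan is to reduce \eqref{eq:II} to a statement about the narrow-wedge profile $y\mapsto\cH(L^{-1}y,1)$ on the spatial window $[-L^{-1}R,L^{-1}R]$ under $A_L$. We then invoke the conditional Brownian comparison of \cite{GHZ25}. Write $\cH(\cdot,1)=\cL(0,0;\cdot,1)$, an Airy$_2$ process minus a parabola. Under $A_L$ this profile near $0$ looks like a Brownian motion of diffusivity $2$ with drift of order $\sqrt L$ pointing away from the peak. After the rescaling $y\mapsto \sqrt L[\cH(L^{-1}y,1)-\cH(0,1)]$, the drift becomes $-2|y|$ and the diffusivity becomes $2$ per unit $y$. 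This is exactly why the limit is $h_\vee$.

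The concrete route I would take is as follows.

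\textbf{Step 1 (comparison).} Apply \cite[Proposition~3.5]{GHZ25}, the local Brownian comparison under upper-tail conditioning. In the form I expect to use, it states: on the window $|z|\le R L^{-1}$, the law of $z\mapsto \cH(z,1)-\cH(0,1)$ under $\Pp(\cdot\mid \cH(0,1)\ge L)$ is, with probability tending to one, dominated in the sense of increments by a Brownian motion with a drift of order $\sqrt L$. Alternatively, it can be coupled with such a Brownian motion up to an error that is $o(L^{-1/2})$ uniformly on the window. If the cited comparison is stated as a Radon--Nikodym or stochastic-domination bound on the law of the increments, that suffices. The event $\{\omega_R(g_L,\delta)>\eps\}$ is a functional of the increments, so its conditional probability is bounded by a constant times the corresponding probability for the comparison process, plus $o(1)$.

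\textbf{Step 2 (Brownian estimate).} After rescaling, the comparison process becomes $W_L(y)=B(2y)+\mu_L(y)$. Here $\mu_L$ is a drift that is Lipschitz on $[-R,R]$ with constant bounded uniformly in $L$, since the rescaled drift is of order $1$. Hence
\[
\omega_R(W_L,\delta)\le \omega_R(B(2\,\cdot),\delta)+C_R\delta .
\]
By Lévy's modulus of continuity, $\Pp(\omega_R(B(2\cdot),\delta)>\eps/2)\to0$ as $\delta\downarrow0$, uniformly in $L$. Combining this with Step 1 and taking $\limsup_L$ and then $\delta\downarrow0$ gives \eqref{eq:II}.

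\textbf{Step 3 (main obstacle).} The real difficulty is Step 1. We need the comparison of \cite{GHZ25} to hold at exactly the microscopic spatial scale $L^{-1}$ and in a form strong enough to control a supremum of increments, not just finite-dimensional marginals. The event must also be conditioned on $A_L$ rather than on $\{\cH(0,1)=L\}$.

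I would handle the conditioning by disintegrating over the value $\cH(0,1)=L+\sqrt{L}^{-1}u$. By \eqref{eq:upper-tail-GUE}, the overshoot $u$ is tight, with a $\mathrm{Exp}(2)$ limit, so it suffices to have the comparison uniformly for $u$ in compacts.

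If \cite{GHZ25} only provides comparison on a mesoscopic window $[-\eta,\eta]$ for fixed $\eta$, I would restrict it to $[-RL^{-1},RL^{-1}]$. There Brownian scaling maps the window exactly to $[-R,R]$ after multiplication by $\sqrt L$, so the estimate transfers.

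Should the comparison give only one-sided domination, I would instead use Brownian Gibbs resampling of the top Airy line. Conditional on the boundary data outside the window and on the second curve, the top curve is a Brownian bridge conditioned to avoid the curve below. Under $A_L$ the top curve is at height about $L$, far above the second curve, so the avoidance conditioning is asymptotically irrelevant. The increments are then those of a Brownian bridge with Lipschitz endpoints, to which Step 2 applies directly.
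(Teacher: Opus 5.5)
Your route is the paper's: apply \cite[Proposition~3.5]{GHZ25} under $\Pp(\cdot\mid A_L)$ on $[0,H]$ with $H=10^{-6}L^{1/2}$, restrict the $Ce^{-cL}$ coupling to $[0,R/L]$, rescale, and bound $\omega_R(g_L,\delta)$ by the modulus of the rescaled rate-$2$ bridge plus $\delta$ times the drift slope. The restricted coupling error stays negligible after multiplying by $\sqrt L$. Since the comparison is already stated under $A_L$, no disintegration over the overshoot is needed.

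The one step you assert rather than prove is the size of that slope. The comparison process is not a Brownian motion with a deterministic drift. It is $B_+(u)+\frac{u}{H}[\cH(H,1)-\cH(0,1)]$, so after rescaling the drift is $s_L y$ with the \emph{random} slope $s_L=(\cH(H,1)-\cH(0,1))/(H\sqrt L)$. Your claim that the drift is Lipschitz with a constant bounded uniformly in $L$ must be replaced by tightness of $s_L$ under $\Pp(\cdot\mid A_L)$. That tightness does not come out of Proposition~3.5 itself. The paper obtains it from the tent estimate \cite[Lemma~2.20]{GHZ25} at $u=\pm H$ together with the overshoot bound \eqref{eq:overshoot}, which even gives $s_L\to-2$. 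If you only need tightness, a cheaper argument uses inputs you already have. On the coupling, $R s_L=g_L(R)-\sqrt L\,B_+(R/L)+o(1)$. Here $g_L(R)$ is tight by \eqref{eq:gL-fdd}, and $\sqrt L\,B_+(R/L)$ is Gaussian with variance at most $2R$.

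Two smaller points. First, a one-sided stochastic domination cannot control the two-sided quantity $\omega_R$, so only the Radon--Nikodym/coupling form of the comparison is usable in Step~1. Second, in your Brownian Gibbs fallback, resampling on a window containing $0$ is incompatible with conditioning on $A_L=\{\cH(0,1)\ge L\}$. The event $A_L$ is not measurable with respect to the data outside such a window. You would have to resample on $[0,R/L]$ and $[-R/L,0]$ separately, with $\cH(0,1)$ as boundary data. You would still need to control the second curve under $A_L$ and the tightness of the endpoint values, the latter again via \eqref{eq:gL-fdd}.
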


Since $g_L(0)=0$, \cref{prop:II} gives tightness of the conditional laws of $g_L$ in $C([-R,R])$ equipped with the uniform topology.  By Prokhorov's theorem, every subsequence therefore contains a further subsequence converging weakly in $C([-R,R])$.  The finite-dimensional convergence \eqref{eq:gL-fdd} implies that every such subsequential limit has the same finite-dimensional distributions as $h_\vee$.  Since $h_\vee$ has continuous sample paths, these finite-dimensional distributions determine its law on $C([-R,R])$.  This proves the following compact convergence statement.

\begin{corollary}[Compact convergence]\label{cor:profile-compact}
	For every $R<\infty$, under the conditional law
	$\Pp(\,\cdot\mid A_L)$,
	\begin{equation}\label{eq:profile-compact}
		g_L
		\Longrightarrow
		h_\vee
	\end{equation}
	in distribution in $C([-R,R])$ as \(L \to \infty, \) equipped with the uniform topology.
\end{corollary}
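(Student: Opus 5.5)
The corollary follows from \cref{prop:II}, the finite-dimensional convergence \eqref{eq:gL-fdd}, and the standard tightness criterion in $C([-R,R])$; the paragraph before the statement already contains this argument in outline. I would carry it out as follows.

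\emph{Tightness.} Fix $R<\infty$ and regard $g_L|_{[-R,R]}$ as a random element of $C([-R,R])$. It is continuous because $\cH(\cdot,1)$ is continuous, and I write $\mu_L$ for its law under $\Pp(\,\cdot\mid A_L)$. By the Arzel\`a--Ascoli characterization of tightness (Billingsley, Theorem~7.3), the family $\{\mu_L\}$ is tight once two things hold. First, the values at one point must be tight. This is immediate, since $g_L(0)=0$ identically by \eqref{eq:gL}. Second, the modulus of continuity must satisfy
\[
\lim_{\delta\downarrow0}\limsup_{L\to\infty}\Pp\left(\omega_R(g_L,\delta)>\eps\mid A_L\right)=0
\qquad\text{for every }\eps>0 .
\]
This is exactly \cref{prop:II}. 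A limsup statement is enough here: each individual $\mu_L$ is a law on $C([-R,R])$ and is therefore tight, so finitely many small values of $L$ can be handled separately.

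\emph{Identifying the limit.} By Prokhorov's theorem, every sequence $L_n\to\infty$ has a subsequence along which $\mu_{L_n}$ converges weakly to some law $\mu$ on $C([-R,R])$. For any $y_1,\dots,y_m\in[-R,R]$, the evaluation map $f\mapsto (f(y_1),\dots,f(y_m))$ is continuous on $C([-R,R])$. The continuous mapping theorem therefore shows that the finite-dimensional marginals of $\mu$ are limits of those of $\mu_{L_n}$. By \eqref{eq:gL-fdd}, which comes from \eqref{eq:HUT-definition} and \eqref{eq:intro-timezero}, these marginals coincide with those of $h_\vee$.

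The profile $h_\vee(y)=B_{\mathrm{ts}}(2y)-2|y|$ has continuous paths, so its law is itself a law on $C([-R,R])$. The cylinder $\sigma$-algebra generated by evaluations equals the Borel $\sigma$-algebra of $C([-R,R])$, and finite-dimensional distributions form a $\pi$-system generating it. Hence $\mu=\Law(h_\vee|_{[-R,R]})$. Since every subsequential limit is the same law, the full family converges, giving $g_L\Rightarrow h_\vee$ in $C([-R,R])$, which is \eqref{eq:profile-compact}.

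There is no real obstacle in this argument. All the analytic difficulty sits in \cref{prop:II}, which we are allowed to assume. The one point requiring care is that \eqref{eq:gL-fdd} holds under the conditional measure $\Pp(\,\cdot\mid A_L)$, so the tightness and identification steps must both be carried out under that same conditional measure. They are: every probability above is taken under $\Pp(\,\cdot\mid A_L)$.
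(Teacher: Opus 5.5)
Your proposal is correct and follows the same route as the paper: tightness in $C([-R,R])$ from \cref{prop:II} together with $g_L(0)=0$, then Prokhorov's theorem, then identification of every subsequential limit through the finite-dimensional convergence \eqref{eq:gL-fdd}, and finally uniqueness because $h_\vee$ has continuous paths. One small point: $L$ is a continuous parameter, so ``finitely many small values of $L$'' should be read along an arbitrary sequence $L_n\to\infty$, which is how you carry out the identification step anyway.
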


To apply \cref{cor:profile-compact} to the variational formula \eqref{eq:exact-var}, we must also show that the
maximizing point does not escape to spatial infinity.  Since $y=0$ is an
admissible point in the variational problem, it is enough to show that,
with probability tending to one, no point outside a sufficiently large
compact interval can produce a value larger than the value at $y=0$.
This is the content of the following localization estimate.

\begin{proposition}[Localization]\label{prop:III}
For every finite collection $(x_i,t_i)\in\R\times(0,\infty)$,
\begin{equation}\label{eq:III}
  \lim_{R\to\infty}\limsup_{L\to\infty}
  \Pp\left(
    \left.
    \exists i:\ \sup_{|y|>R}\left\{g_L(y)+\cL_L^+(y,0;x_i,t_i)\right\}
    \ge \cL_L^+(0,0;x_i,t_i)
    \ \right|\ A_L
  \right)=0.
\end{equation}
\end{proposition}

\subsection{Local oscillation}

We use two inputs concerning the spatial profile under the upper-tail
conditioning.  The first controls the height at the conditioned point.
By \cite[Remark~1.6, equation~(1.13)]{LZ25},
\begin{equation}\label{eq:overshoot}
	\sqrt L\left[\cH(0,1)-L\right]
	=
	O_{\Pp(\,\cdot\mid A_L)}(1),
	\qquad
	A_L=\left\{\cH(0,1)\ge L\right\}.
\end{equation}
In fact, the left-hand side converges to an $\operatorname{Exp}(2)$
random variable.

The second input is the zero-temperature specialization of
\cite[Proposition~3.5]{GHZ25}.  Set $H=10^{-6}L^{1/2}$ and, for
$0\le u\le H$, write
\[
P_L^\pm(u)
=
\cH(\pm u,1)-\cH(0,1).
\]
Let $\mu_L$ be the law of $(P_L^+,P_L^-)$ under
$\Pp(\,\cdot\mid A_L)$.  Let $B_+$ and $B_-$ be independent rate-$2$
Brownian bridges on $[0,H]$ from $0$ to $0$, independent of the
directed landscape, and set
\[
\widetilde P_L^\pm(u)
=
B_\pm(u)
+
\frac{u}{H}
\left[\cH(\pm H,1)-\cH(0,1)\right].
\]
Thus $\widetilde P_L^\pm$ is the Brownian bridge obtained by adding the
affine interpolation between the two endpoint values of $P_L^\pm$.

Proposition~3.5 of \cite{GHZ25}, applied with $L^+=\infty$ and then
restricted to these two profiles, gives an auxiliary probability
measure $\nu_L$ on $C([0,H],\R)^2$.  Outside an exceptional event of
probability at most $Ce^{-cL^{3/2}}$, its Radon--Nikodym derivative
with respect to $\mu_L$ satisfies
\[
\frac{d\nu_L}{d\mu_L}
=
1+O(e^{-cL}),
\]
and $\nu_L$ can be coupled with the law of
$(\widetilde P_L^+,\widetilde P_L^-)$ so that
\[
\max_{\sigma\in\left\{+,-\right\}}
\sup_{0\le u\le H}
\left|
P_L^\sigma(u)-\widetilde P_L^\sigma(u)
\right|
\le
Ce^{-cL}
\]
outside an event of probability at most $Ce^{-cL^{3/2}}$.  Hence,
up to errors that are exponentially small in $L$, the conditioned
profiles are independent rate-$2$ Brownian bridges around their random
affine endpoint interpolations.

We control these affine terms with the upper-tail tent estimate
\cite[Lemma~2.20]{GHZ25}.  For
$I\subset[-L^{1/2}/2,L^{1/2}/2]$, put
$\sigma_I=\sup_{u\in I}|u|^{1/2}$.  Then, for
$0<M<L^{3/4}$,
\begin{equation}\label{eq:GHZtent}
	\Pp\left(
	\left.
	\sup_{u\in I}
	\left|
	\cH(u,1)
	-\left(L-2L^{1/2}|u|\right)
	\right|
	>M\sigma_I
	\ \right|\ A_L
	\right)
	\le
	C\exp\left\{
	-c\left(
	M^2\wedge M\sigma_I L^{1/2}
	\right)
	\right\}.
\end{equation}
Applying \eqref{eq:GHZtent} at the endpoints $\pm H$, for example with
$M=L^{1/8}$, gives
\[
\frac{
	\cH(\pm H,1)-\left(L-2L^{1/2}H\right)
}{
	H\sqrt L
}
\longrightarrow0
\]
in probability under $\Pp(\,\cdot\mid A_L)$.  Together with
\eqref{eq:overshoot}, this yields
\begin{equation}\label{eq:endpoint-slope}
	\frac{
		\cH(\pm H,1)-\cH(0,1)
	}{
		H\sqrt L
	}
	\longrightarrow-2
\end{equation}
in probability.  Thus Proposition~3.5 of \cite{GHZ25} controls the
Brownian fluctuations around the affine interpolation, while
\eqref{eq:GHZtent} identifies its limiting slope.

\begin{proof}[Proof of \cref{prop:II}]
	Fix $R<\infty$.  Since $R/L<H$ for all sufficiently large $L$, the
	comparison above applies to the restrictions of the profiles to
	$[0,R/L]$.  For $0\le y\le R$,
	\[
	g_L(y)
	=
	\sqrt L\,P_L^+(y/L).
	\]
	The Radon--Nikodym comparison and the coupling with
	$\widetilde P_L^+$ therefore imply
	\begin{equation}\label{eq:bridge-rescaled}
		\sup_{0\le y\le R}
		\left|
		g_L(y)
		-\sqrt L\,B_+(y/L)
		-\frac{\cH(H,1)-\cH(0,1)}{H\sqrt L}\,y
		\right|
		\longrightarrow0
	\end{equation}
	in probability under $\Pp(\,\cdot\mid A_L)$.
	
	The rescaled Brownian bridges are tight in $C([0,R])$.  Indeed, if
	$0\le y,z\le R$, then
	\[
	\operatorname{Cov}\left(
	\sqrt L\,B_+(y/L),
	\sqrt L\,B_+(z/L)
	\right)
	=
	2\left(
	\min\left\{y,z\right\}
	-\frac{yz}{LH}
	\right),
	\]
	and hence
	\begin{equation}\label{eq:bridge-limit}
		\left\{
		\sqrt L\,B_+(y/L)
		:0\le y\le R
		\right\}
		\Longrightarrow
		\left\{
		B(2y):0\le y\le R
		\right\}
	\end{equation}
	in $C([0,R])$, where $B$ is a standard Brownian motion.
	
	By \eqref{eq:endpoint-slope}, the affine coefficient in
	\eqref{eq:bridge-rescaled} is tight and converges to $-2$.  Therefore,
	for every $\delta>0$,
	\[
	\omega_R(g_L,\delta)
	\le
	\omega_R\left(
	\sqrt L\,B_+(\,\cdot/L),\delta
	\right)
	+
	\delta
	\left|
	\frac{\cH(H,1)-\cH(0,1)}
	{H\sqrt L}
	\right|
	+
	o_{\Pp(\,\cdot\mid A_L)}(1).
	\]
	Using \eqref{eq:bridge-limit} and the almost-sure continuity of Brownian
	motion gives, for every $\eps>0$,
	\[
	\lim_{\delta\downarrow0}
	\limsup_{L\to\infty}
	\Pp\left(
	\left.
	\omega_R(g_L,\delta)>\eps
	\ \right|\ A_L
	\right)
	=0
	\]
	on $[0,R]$.  The same argument applied to $P_L^-$ gives the corresponding
	estimate on $[-R,0]$.  Since $g_L(0)=0$, the two estimates combine to
	give \eqref{eq:II} on $[-R,R]$.
\end{proof}

\subsection{Localization}

For the outer region we use the global parabolic estimate in \cite[Lemma~3.2]{GHZ25}, there exist
deterministic constants $C,c>0$ and an almost surely finite nonnegative
random variable $C_{\mathrm{par}}$ such that
\begin{equation}\label{eq:initial-global}
	\left|
	\cH(z,1)+z^2
	\right|
	\le
	C_{\mathrm{par}}+\log(2+|z|),
	\qquad z\in\R,
\end{equation}
and, for every $M>0$,
\begin{equation}\label{eq:initial-global-tail}
	\Pp\left(
	C_{\mathrm{par}}>M
	\right)
	\le
	Ce^{-cM^{3/2}}.
\end{equation} 
By \cite[Corollary~10.7]{DOV22}, for fixed $(x,t)$ there is a random variable $K_{x,t}$ with the same type of stretched-exponential tail such that
\begin{equation}\label{eq:future-global}
  \cL(y,0;x,t)
  \le-\frac{(y-x)^2}{t}+K_{x,t}+C_t\log(2+|y|).
\end{equation}

\begin{proof}[Proof of \cref{prop:III}]
On $A_L$,
\begin{equation}\label{eq:g-upper-by-L}
  g_L(y)\le\sqrt L\,[\cH(L^{-1}y,1)-L].
\end{equation}
Fix $c_*=1/8$.  Let $r\ge1$ satisfy
$2r\le c_*L^{3/2}$ and apply \eqref{eq:GHZtent} with \(I=\left[\frac rL,\frac{2r}{L}\right], M=\frac{\sqrt r}{2\sqrt2}.\)
Then \(\sigma_I=\sqrt{\frac{2r}{L}}, M\sigma_I=\frac{r}{2\sqrt L},\) and hence \(M^2=\frac r8, M\sigma_I L^{1/2}=\frac r2.\)
Therefore \eqref{eq:GHZtent} gives
\begin{equation}\label{eq:tent-annulus-original}
	\Pp\left(
	\left.
	\sup_{r/L\le u\le2r/L}
	\left|
	\cH(u,1)-\left(L-2L^{1/2}u\right)
	\right|
	>
	\frac{r}{2\sqrt L}
	\ \right|\ A_L
	\right)
	\le Ce^{-cr}.
\end{equation}
On the complement of this event, for $r\le y\le2r$, \(\sqrt L\left[\cH(L^{-1}y,1)-L\right]\le -2y+\frac r2 \le-\frac32r.\)
Since on $A_L$, \(g_L(y)=\sqrt L\left[\cH(L^{-1}y,1)-\cH(0,1)\right]\le\sqrt L\left[\cH(L^{-1}y,1)-L\right].\)
Applying the same argument to the reflected interval gives
\begin{equation}\label{eq:annulus}
	\Pp\left(
	\left.
	\sup_{r\le|y|\le2r}g_L(y)>-\frac32r
	\ \right|\ A_L
	\right)
	\le Ce^{-cr}.
\end{equation}

To pass from \eqref{eq:annulus} to a uniform bound, apply it with
$r=2^jR$ over all dyadic annuli satisfying
$2^{j+1}R\le c_*L^{3/2}$.  On the complement of the corresponding
exceptional events,
\[
g_L(y)\le-\frac32\,2^jR
\le-\frac34|y|,
\qquad
2^jR\le|y|\le2^{j+1}R.
\]
Moreover, \(\sum_{j\ge0}Ce^{-c2^jR} \le C'e^{-c'R}.\)
The possible terminal interval between the last dyadic annulus and
$c_*L^{3/2}$ is covered by one further application of
\eqref{eq:annulus} with $r=c_*L^{3/2}/2$.  Consequently,
\begin{equation}\label{eq:tent-linear}
	\Pp\left(
	\left.
	g_L(y)\le-c_1|y|
	\ \text{for }R\le|y|\le c_*L^{3/2}
	\ \right|\ A_L
	\right)
	\ge1-Ce^{-c_2R},
\end{equation}
where one may take $c_1=3/4$.
By \cref{lem:finiteL}, the global parabolic estimate
\eqref{eq:future-global} applies to $\cL_L^+$ with constants
uniform in $L$.  More precisely, for each $(x_i,t_i)$ there is a
nonnegative random variable $K^{(L)}_{x_i,t_i}$, independent of $A_L$,
such that
\[
\Pp\left(K^{(L)}_{x_i,t_i}>M\right)\le Ce^{-cM^{3/2}},
\qquad M>0,
\]
uniformly in $L$, and
\[
\cL_L^+(y,0;x_i,t_i)
\le-\frac{(y-x_i)^2}{t_i}+K^{(L)}_{x_i,t_i}+C_i\log(2+|y|).
\]
Hence, on the event in \eqref{eq:tent-linear},
\begin{equation}
	g_L(y)+\cL_L^+(y,0;x_i,t_i)
	\le
	-c_1|y|
	-\frac{(y-x_i)^2}{t_i}
	+K^{(L)}_{x_i,t_i}
	+C_i\log(2+|y|).
\end{equation}
Since \((y-x_i)^2\ge\frac12y^2-x_i^2,\) there exist $c_3>0$ and $C_i<\infty$, independent of $L$, such that
for all sufficiently large $R$,
\begin{equation}\label{eq:intermediate-decay}
	\sup_{R\le|y|\le c_*L^{3/2}}
	\left\{
	g_L(y)+\cL_L^+(y,0;x_i,t_i)
	\right\}
	\le
	-c_3R^2+K^{(L)}_{x_i,t_i}+C_i.
\end{equation}

For the region $|y|\ge c_*L^{3/2}$, we first control the random
constant in \eqref{eq:initial-global}.  Since \(A_L=\left\{\cH(0,1)\ge L\right\}\)
and $\cH(0,1)$ has the GUE Tracy--Widom distribution,  \eqref{eq:upper-tail-GUE} gives
\begin{equation}\label{eq:AL-asymptotic}
	\Pp(A_L)
	=
	\frac{e^{-\frac43L^{3/2}}}{16\pi L^{3/2}}
	\left(1+O(L^{-3/2})\right).
\end{equation}
On the other hand, \eqref{eq:initial-global-tail} implies that for every
$K>0$, \(\Pp\left(C_{\mathrm{par}}>KL\right)\le C\exp\left\{-cK^{3/2}L^{3/2}\right\}.\)
Therefore
\begin{align}
	\Pp\left(
	C_{\mathrm{par}}>KL
	\,\middle|\,
	A_L
	\right)
	\le
	\frac{\Pp\left(C_{\mathrm{par}}>KL\right)}{\Pp(A_L)}
	\le
	C L^{3/2}
	\exp\left\{
	-\left(cK^{3/2}-\frac43\right)L^{3/2}
	\right\}
	\label{eq:Cpar-cond-bound}
\end{align}
for all sufficiently large $L$.  Choosing $K$ so that
$cK^{3/2}>4/3$ yields
\begin{equation}\label{eq:parabolic-constant-cond}
	\Pp\left(
	C_{\mathrm{par}}>KL
	\,\middle|\,
	A_L
	\right)
	\longrightarrow0.
\end{equation}

We next derive the corresponding bound for $g_L$.  On $A_L$,
\eqref{eq:g-upper-by-L} gives
\[
g_L(y)\le\sqrt L\left[\cH(L^{-1}y,1)-L\right].
\]
Applying \eqref{eq:initial-global} with $z=L^{-1}y$ gives
\[
\cH(L^{-1}y,1)
\le
-\frac{y^2}{L^2}+C_{\mathrm{par}}
+\log\left(2+\frac{|y|}{L}\right).
\]
Hence 
\[g_L(y)
\le
-\frac{y^2}{L^{3/2}}
+\sqrt L\,C_{\mathrm{par}}
-L^{3/2}
+\sqrt L\log\left(2+\frac{|y|}{L}\right).\]
On the event
$\left\{C_{\mathrm{par}}\le KL\right\}$, this becomes
\begin{equation}\label{eq:outer-g-bound}
	g_L(y)
	\le
	-\frac{y^2}{L^{3/2}}
	+(K-1)L^{3/2}
	+\sqrt L\log\left(2+\frac{|y|}{L}\right).
\end{equation}

By \cref{lem:finiteL}, $\cL_L^+$ has the directed-landscape law and is
independent of $A_L$.  Hence the global parabolic estimate
\eqref{eq:future-global} applies to $\cL_L^+$ with the same tail bounds.
For the finite collection $(x_i,t_i)$, with conditional probability
tending to one,
\begin{equation}\label{eq:outer-future-bound}
	\cL_L^+(y,0;x_i,t_i)
	\le
	-\frac{(y-x_i)^2}{t_i}
	+L+C_i\log(2+|y|)
\end{equation}
simultaneously for all $i$ and all $y\in\R$.

For $|y|\ge c_*L^{3/2}$ and $L$ sufficiently large, \(\frac{(y-x_i)^2}{t_i}\ge \frac{y^2}{2t_i}.\)
Moreover,
\begin{equation}
	\frac{L^{3/2}}{y^2}
	\le
	\frac{1}{c_*^2L^{3/2}},
	\qquad
	\frac{\sqrt L\log\left(2+|y|/L\right)}{y^2}
	=o(1),
\end{equation}
uniformly for $|y|\ge c_*L^{3/2}$, and the same holds for
$L/y^2$ and $\log(2+|y|)/y^2$.  Combining
\eqref{eq:outer-g-bound} and \eqref{eq:outer-future-bound} therefore
gives, 
\begin{equation}\label{eq:outer-decay}
	\Pp\left(
	\left.
	\exists\,1\le i\le m,\ \exists\,|y|\ge c_*L^{3/2}
	\text{ such that }
	g_L(y)+\cL_L^+(y,0;x_i,t_i)>-c_4y^2
	\ \right|\ A_L
	\right)
	\longrightarrow0,
\end{equation}
where one may take \(c_4=\frac{1}{4\max_{1\le i\le m} t_i}.\)

For the intermediate region, \eqref{eq:intermediate-decay} and the
uniform tail bound on $K^{(L)}_{x_i,t_i}$ imply, for every $M>0$,
\[
\lim_{R\to\infty}\limsup_{L\to\infty}
\Pp\left(
\left.
\exists i:\ \sup_{R\le|y|\le c_*L^{3/2}}
\left\{g_L(y)+\cL_L^+(y,0;x_i,t_i)\right\}>-M
\ \right|\ A_L
\right)=0.
\]
Together with \eqref{eq:outer-decay}, this gives, for every $M>0$,
\begin{equation}\label{eq:outside-minus-M}
  \lim_{R\to\infty}\limsup_{L\to\infty}
  \Pp\left(
    \left.
    \exists i:\ \sup_{|y|>R}\left\{g_L(y)+\cL_L^+(y,0;x_i,t_i)\right\}>-M
    \ \right|\ A_L
  \right)=0.
\end{equation}
Since $g_L(0)=0$ and 
\(\left\{
\cL_L^+(0,0;x_i,t_i)
\right\}_{i=1}^m
\overset d=
\left\{
\cL(0,0;x_i,t_i)
\right\}_{i=1}^m,\)
if the event in \eqref{eq:III} occurs and all reference values
$\cL_L^+(0,0;x_i,t_i)\ge -M,$ then the corresponding
outer supremum is at least $-M$.  Hence, for every $M>0$,
\begin{align}
	&\Pp\left(
	\left.
	\exists\,1\le i\le m,\ 
	\sup_{|y|>R}
	\left\{
	g_L(y)+\cL_L^+(y,0;x_i,t_i)
	\right\}
	\ge
	\cL_L^+(0,0;x_i,t_i)
	\ \right|\ A_L
	\right)
	\nonumber\\
	&\qquad\le
	\Pp\left(
	\left.
	\exists\,1\le i\le m,\ 
	\sup_{|y|>R}
	\left\{
	g_L(y)+\cL_L^+(y,0;x_i,t_i)
	\right\}
	\ge -M
	\ \right|\ A_L
	\right)
	\nonumber\\
	&\qquad\quad+
	\Pp\left(
	\min_{1\le i\le m}
	\cL(0,0;x_i,t_i)<-M
	\right).
	\label{eq:localization-final-bound}
\end{align}
Therefore,
\begin{align*}
	&\lim_{R\to\infty}\limsup_{L\to\infty}
	\Pp\left(
	\left.
	\exists\,1\le i\le m,\ 
	\sup_{|y|>R}
	\left\{
	g_L(y)+\cL_L^+(y,0;x_i,t_i)
	\right\}
	\ge
	\cL_L^+(0,0;x_i,t_i)
	\ \right|\ A_L
	\right)
	\\
	&\qquad\le
	\lim_{M\to\infty}
	\Pp\left(
	\min_{1\le i\le m}
	\cL(0,0;x_i,t_i)<-M
	\right)
	=0.
\end{align*}
\end{proof}

\subsection{Proof of the representation and the positive-time limit}

\begin{proof}[Proof of \cref{thm:main}]
Fix $(x_i,t_i)\in\R\times(0,\infty)$, $1\le i\le m$.  For every fixed $R$, \cref{cor:profile-compact} gives
$g_L\Longrightarrow h_\vee$ in $C([-R,R])$ under $\Pp(\,\cdot\mid A_L)$.  By \cref{lem:finiteL}, $\cL_L^+$ is independent of $(g_L,A_L)$ and has exactly the directed-landscape law.  Hence, under the conditional law,
\[
\left(g_L,\cL_L^+\right)
\Longrightarrow
\left(h_\vee,\cL\right)
\]
on compact sets, where $h_\vee$ and $\cL$ are independent.  The continuous mapping theorem therefore gives, for every fixed $R$,
\begin{equation}\label{eq:compact-sup-limit}
  \left\{
    \sup_{|y|\le R}\left\{g_L(y)+\cL_L^+(y,0;x_i,t_i)\right\}
  \right\}_{i=1}^m
  \Longrightarrow
  \left\{
    \sup_{|y|\le R}\left\{h_\vee(y)+\cL(y,0;x_i,t_i)\right\}
  \right\}_{i=1}^m.
\end{equation}
By \cref{prop:III}, the compact restriction can be removed after $L\to\infty$ and then $R\to\infty$.  The limiting suprema are finite by \cite[Corollary~10.7]{DOV22}. Hence \eqref{eq:kpz-variational-prelim} gives
\begin{equation}\label{eq:prelimit-to-kpz}
  \left\{
    \sqrt L\,[\cH(L^{-1}x_i,1+L^{-3/2}t_i)-\cH(0,1)]
  \right\}_{i=1}^m
  \Longrightarrow
  \left\{\KPZ(x_i,t_i;h_\vee)\right\}_{i=1}^m.
\end{equation}
By \cite[Theorem~1.1]{LZ25}, the left-hand side of \eqref{eq:prelimit-to-kpz} converges jointly to $\left\{\HUTz(x_i,t_i)\right\}_{i=1}^m$.  Uniqueness of weak limits proves \eqref{eq:main-explicit}.  The same argument, together with \eqref{eq:profile-compact}, is joint with finitely many time-zero coordinates and proves \eqref{eq:main-fdd}.
\end{proof}

\begin{proof}[Proof of \cref{prop:positive-longtime}]
By \cref{thm:main}, \eqref{eq:landscape-scaling}, and Brownian scaling,
\begin{equation}\label{eq:positive-scaled-variational}
  \lambda^{-1/3}\HUTz(\lambda^{2/3}x,\lambda t)
  \overset d=
  \sup_{y\in\R}\left\{
    B_{\mathrm{ts}}(2y)-2\lambda^{1/3}|y|+\cL(y,0;x,t)
  \right\}.
\end{equation}
By \cite[Corollary~10.7]{DOV22} and $B_{\mathrm{ts}}(2y)=o(|y|)$ almost surely,
\[
  \sup_{y\in\R}\left\{B_{\mathrm{ts}}(2y)+\cL(y,0;x,t)-|y|\right\}<\infty
  \qquad\text{a.s.}
\]
Hence for every $\delta>0$, the right-hand side of \eqref{eq:positive-scaled-variational} restricted to $|y|\ge\delta$ tends uniformly to $-\infty$.  Since its value at $y=0$ is $\cL(0,0;x,t)$,
\begin{align*}
0
&\le \sup_{y\in\R}\left\{B_{\mathrm{ts}}(2y)-2\lambda^{1/3}|y|+\cL(y,0;x,t)\right\}-\cL(0,0;x,t)
\\
&\le \sup_{|y|<\delta}\left|B_{\mathrm{ts}}(2y)+\cL(y,0;x,t)-\cL(0,0;x,t)\right|
\end{align*}
for all sufficiently large $\lambda$.  Let $\lambda\to\infty$ and then $\delta\downarrow0$.  Continuity of $B_{\mathrm{ts}}$ and $\cL$ gives almost-sure convergence to $\cL(0,0;x,t)$, jointly for every finite collection of $(x,t)$.  This proves \eqref{eq:positive-longtime}.
\end{proof}

\section{All-time representation}\label{sec:negative}

We now prove \cref{thm:alltime}.  The only inputs from \cite{LZ25} are
the simultaneous shift relation for the joint tail probabilities of the
upper-tail field and the derivative formula for the grounded field.
To simplify notation, we write
\begin{equation}\label{eq:T-definition}
	T(\beta_1,\ldots,\beta_m;z_1,\ldots,z_m)
	=
	\Pr\left(
	\bigcap_{i=1}^m
	\left\{
	\HUT(z_i)\ge\beta_i
	\right\}
	\right),
\end{equation}
where $z_i=(\alpha_i,\tau_i)\in\R^2$ are distinct points.  This is the
joint tail function denoted by $\widehat T$ in \cite[Definition~2.4]{LZ25}.
Whenever $T$ appears below, the points $z_i$ and the corresponding
thresholds $\beta_i$ are ordered according to the total order $\prec$.

By \cite[Proposition~2.8]{LZ25}, for $(\widehat x,\widehat t,\widehat\beta)\in\R^3$,
\begin{equation}\label{eq:LZ-shift-T}
\begin{aligned}
&T(\beta_1,\ldots,\beta_m;z_1,\ldots,z_m)
\\
&\quad=e^{2\widehat t/3-2\widehat\beta}
T(\beta_1-\widehat\beta,\ldots,\beta_m-\widehat\beta;
  z_1-(\widehat x,\widehat t),\ldots,z_m-(\widehat x,\widehat t)).
\end{aligned}
\end{equation}
By \cite[Proposition~2.12]{LZ25}, for distinct
$z_1,\ldots,z_m\ne(0,0)$,
\begin{equation}\label{eq:LZ-grounded-tail}
\Pr\left(\bigcap_{i=1}^m\left\{\HUTz(z_i)\ge a_i\right\}\right)
=-\frac12\left.
\frac{\partial}{\partial b}
T(\ldots,b,\ldots;\ldots,(0,0),\ldots)
\right|_{b=0},
\end{equation}
where the pairs $(a_i,z_i)$ together with $(b,(0,0))$ are inserted in
$\prec$-order in the arguments of $T$.

The following elementary lemma isolates the differentiation with respect to
the exponential variable that appears after the skew shift.  The random
variables $U_1,\ldots,U_n$ represent the values of the shifted positive-time
field at the observation points, while $V$ represents its value at the
shifted distinguished point. 

\begin{lemma}[Exponential differentiation]\label{lem:exp-diff}
Let $(\Omega,\mathcal F,\mathbb P)$ be a probability space carrying
random variables $U_1,\ldots,U_n,V$ and an independent random variable
$E\sim\operatorname{Exp}(2)$.  For $a_1,\ldots,a_n,s\in\R$, set
\[
C=\max\left\{0,a_1-U_1,\ldots,a_n-U_n\right\}
\]
and
\[
F(s)=\mathbb P\left(E+U_i\ge a_i,\ 1\le i\le n,\ E+V\ge s\right).
\]
If $\mathbb P(V+C=s)=0$, then
\begin{equation}\label{eq:exp-diff}
-\frac12F'(s)
=e^{-2s}\mathbb E\left[
 e^{2V}\1_{\left\{V<s\right\}}
 \prod_{i=1}^n\1_{\left\{U_i-V>a_i-s\right\}}
\right].
\end{equation}
\end{lemma}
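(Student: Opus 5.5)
Since $E$ is independent of $(U_1,\ldots,U_n,V)$, we condition on $(U,V)$ and write $F$ as an expectation of an explicit exponential tail. The event $\{E+U_i\ge a_i\ \forall i,\ E+V\ge s\}$ equals $\{E\ge \max(C,\,s-V)\}$, where $C$ already absorbs the constraint $E\ge0$. Since $\Pp(E\ge r)=e^{-2\max(r,0)}$, we get
\[
F(s)=\E\left[e^{-2\max\{C,\,s-V\}}\right],
\]
with $C\ge0$ ensuring the exponent is nonpositive. The map $s\mapsto e^{-2\max\{C,s-V\}}$ is constant for $s<V+C$. It equals $e^{-2(s-V)}$ for $s>V+C$, with derivative $-2e^{-2(s-V)}$ there. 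It is nondifferentiable only at the single point $s=V+C$.

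Next I would differentiate under the expectation. The integrand is $2$-Lipschitz in $s$ and bounded by $1$, so the difference quotients $(F(s+h)-F(s))/h$ are dominated by the constant $2$. At the fixed $s$, the hypothesis $\Pp(V+C=s)=0$ guarantees that the pointwise derivative exists almost surely and equals $-2e^{-2(s-V)}\1_{\{s>V+C\}}$. Dominated convergence then gives
\[
-\tfrac12F'(s)=e^{-2s}\,\E\left[e^{2V}\1_{\{V+C<s\}}\right].
\]
Boundedness of $e^{2V}$ is not an issue on this event, since there $e^{2(V-s)}\le 1$.

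Finally I would rewrite the indicator. The condition $V+C<s$ means $V<s$ together with $V+a_i-U_i<s$ for every $i$, that is, $U_i-V>a_i-s$. This yields exactly $\1_{\{V<s\}}\prod_{i=1}^n\1_{\{U_i-V>a_i-s\}}$ and hence \eqref{eq:exp-diff}.

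The only delicate point is interchanging the derivative and the expectation at a kink. This is handled by the Lipschitz domination together with the null-probability hypothesis; no further obstacle is expected.
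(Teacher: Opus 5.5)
Your proposal is correct and follows the paper's proof essentially step for step. Both condition on $(U_1,\ldots,U_n,V)$ to get $F(s)=\E[e^{-2\max\{C,s-V\}}]$, bound the difference quotients by $2$ using $C\ge0$, use $\Pp(V+C=s)=0$ to obtain the almost-sure pointwise derivative and apply dominated convergence, and finally rewrite $\{V+C<s\}$ as $\{V<s\}\cap\bigcap_i\{U_i-V>a_i-s\}$.
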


\begin{proof}
Conditioning on $(U_1,\ldots,U_n,V)$ gives
\begin{equation}\label{eq:exp-diff-F}
F(s)=\mathbb E\left[e^{-2\max\left\{C,s-V\right\}}\right].
\end{equation}
For fixed $(U_1,\ldots,U_n,V)$, put
$\phi(r)=e^{-2\max\left\{C,r-V\right\}}$.  Since $C\ge0$,
$|\phi'(r)|\le2$ wherever the derivative exists, and consequently
\begin{equation}\label{eq:exp-diff-quotient-bound}
\left|\frac{\phi(s+h)-\phi(s)}{h}\right|\le2,
\qquad h\ne0.
\end{equation}
On $\left\{V+C\ne s\right\}$,
\begin{equation}\label{eq:exp-diff-quotient-limit}
\lim_{h\to0}\frac{\phi(s+h)-\phi(s)}{h}
=-2e^{-2(s-V)}\1_{\left\{V+C<s\right\}}.
\end{equation}
Since $\mathbb P(V+C=s)=0$, dominated convergence applied to
\eqref{eq:exp-diff-F}--\eqref{eq:exp-diff-quotient-limit} yields
\begin{equation}\label{eq:exp-diff-intermediate}
-\frac12F'(s)
=e^{-2s}\mathbb E\left[e^{2V}\1_{\left\{V+C<s\right\}}\right].
\end{equation}
Finally,
\[
\left\{V+C<s\right\}
=
\left\{V<s\right\}
\cap
\bigcap_{i=1}^n\left\{U_i-V>a_i-s\right\}.
\]
Substituting this identity into \eqref{eq:exp-diff-intermediate} proves
\eqref{eq:exp-diff}.
\end{proof}

\begin{lemma}[Skew-shift transfer]\label{lem:skew-transfer}
With the notation of \cref{thm:alltime}, for all $a_1,\ldots,a_m\in\R$,
\begin{equation}\label{eq:alltime-tail-identity}
  \Pr\left(\bigcap_{i=1}^m\left\{\HUTz(z_i)\ge a_i\right\}\right)
  =e^{2t_k/3}\E_{\Pp}\left[
    e^{2V_k}
    \prod_{i=1}^m
    \1_{\{\KPZ(x_i-x_k,t_i-t_k;h_\vee)-V_k\ge a_i\}}
  \right].
\end{equation}
\end{lemma}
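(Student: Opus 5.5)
The plan is to start from the derivative formula \eqref{eq:LZ-grounded-tail} and express the left-hand side of \eqref{eq:alltime-tail-identity} as $-\tfrac12\partial_b T(\ldots,b,\ldots;\ldots,(0,0),\ldots)|_{b=0}$. In the arguments of $T$, the pairs $(a_i,z_i)$ and $(b,(0,0))$ appear in $\prec$-order. Next, apply the simultaneous shift \eqref{eq:LZ-shift-T} with $(\widehat x,\widehat t)=(x_k,t_k)=z_k$ and $\widehat\beta=a_k$. This moves the earliest point to the origin. The shifted points are $z_i-z_k$ and $(-x_k,s_k)$, and all have nonnegative time coordinate, because $z_k$ is $\prec$-minimal and $t_k<0$. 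Translation preserves $\prec$-order, so the ordering conventions are respected. The prefactor $e^{2t_k/3-2a_k}$ does not depend on $b$. We obtain
\[
T(\ldots)=e^{2t_k/3-2a_k}\,\Pr\Big(\HUT(0,0)\ge0,\ \HUT(z_i-z_k)\ge a_i-a_k\ (i\ne k),\ \HUT(-x_k,s_k)\ge b-a_k\Big),
\]
where the condition at the origin is now $\HUT(0,0)\ge a_k-a_k=0$ and is automatically satisfied.

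Next, write $\HUT=E+\HUTz$, where $E=\HUT(0,0)\sim\operatorname{Exp}(2)$ is independent of $\HUTz$ (Proposition~1.5 of \cite{LZ25}). All shifted points lie in $\R\times[0,\infty)$, so \cref{thm:main} identifies the joint law of $\HUTz$ at them with $\KPZ(\cdot;h_\vee)$. This holds jointly with the time-zero coordinates, which covers shifted points with $t_i=t_k$. Set
\[
U_i=\KPZ(x_i-x_k,t_i-t_k;h_\vee),\qquad V=V_k=\KPZ(-x_k,s_k;h_\vee).
\]
Note that $U_k=h_\vee(0)=0$. The probability above is then exactly $F(s)$ of \cref{lem:exp-diff}, with $n=m-1$, thresholds $a_i-a_k$ for $i\ne k$, and $s=b-a_k$.

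Applying \cref{lem:exp-diff} at $s=-a_k$ gives
\[
-\tfrac12\partial_b T|_{b=0}
=e^{2t_k/3-2a_k}\,e^{2a_k}\,\E_{\Pp}\Big[e^{2V_k}\1_{\{V_k<-a_k\}}\prod_{i\ne k}\1_{\{U_i-V_k>a_i-a_k+a_k\}}\Big].
\]
Here $\1_{\{V_k<-a_k\}}$ is the $i=k$ factor, since $U_k-V_k=-V_k$. This is \eqref{eq:alltime-tail-identity} with strict inequalities in place of $\ge$.

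Three technical points remain, and these form the main obstacle.

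\emph{The atom condition.} We need $\Pp(V+C=s)=0$. I would verify this for Lebesgue-a.e.\ $a$, using that $V+C$ has at most countably many atoms. Alternatively, one can use continuity of the one-point law of $\KPZ(\cdot;h_\vee)$ given $B_{\mathrm{ts}}$.

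\emph{Strict versus non-strict inequalities.} To replace the strict inequalities by $\ge$, I would use right-continuity of both sides in the $a_i$. Both sides are monotone and equal off a null set of parameters. Alternatively, one can show directly that the laws of $U_i-V_k$ have no atoms.

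\emph{Legitimacy of the derivative.} The derivative of $T$ in $b$ is taken in \eqref{eq:LZ-grounded-tail}, and we differentiate the shifted expression in $b$. This is justified because the shift identity holds for all $b$ with a $b$-independent prefactor.

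Finally, the factor $e^{2t_k/3}$ appears, matching $\E e^{2V_k}=e^{2s_k/3}$ when all $a_i=-\infty$.
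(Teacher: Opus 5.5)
Your proposal is correct and follows the paper's proof essentially step for step: the derivative formula \eqref{eq:LZ-grounded-tail}, the shift \eqref{eq:LZ-shift-T} with $(\widehat x,\widehat t,\widehat\beta)=(x_k,t_k,a_k)$, the splitting $\HUT=E+\HUTz$ combined with \cref{thm:main} at the shifted points, \cref{lem:exp-diff} at $s=-a_k$, and multiplication by $e^{2t_k/3-2a_k}$. The only difference is that the paper handles the atom condition and the strict-versus-non-strict issue by citing \cite[Remark~2.11]{LZ25}, whereas you use an a.e.-in-$a$ argument. That route also works, but you should approximate $a$ from below rather than invoke right-continuity: the left side (with $\ge$) is left-continuous, and dominated convergence along $a^{(n)}\uparrow a$ strictly turns the strict indicators on the right into non-strict ones.
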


\begin{proof}
Set $\widetilde z_i=(x_i-x_k,t_i-t_k)$ and $z^\star=(-x_k,s_k)$.
In \eqref{eq:LZ-grounded-tail}, insert $(b,(0,0))$ in its
$\prec$-position and apply \eqref{eq:LZ-shift-T} with
$(\widehat x,\widehat t,\widehat\beta)=(x_k,t_k,a_k)$.  The shifted
list consists of the pairs
\[
(a_i-a_k,\widetilde z_i),\qquad 1\le i\le m,
\]
together with $(b-a_k,z^\star)$, rearranged in $\prec$-order.  Since
$\widetilde z_k=(0,0)$ and $t_i-t_k\ge0$ for every $i$, the
positive-time representation and
\cite[Proposition~1.5(a),(c)]{LZ25} identify the corresponding tail
probability.  Denote this shifted tail function by $T_{\rm sh}(b)$.  Then
\begin{equation}\label{eq:shifted-tail-probability}
T_{\rm shift}(b)
=
\Pp\left(
 E+\KPZ(x_i-x_k,t_i-t_k;h_\vee)\ge a_i-a_k
 \ \text{for } i\ne k,
 \ E+V_k\ge b-a_k
\right),
\end{equation}
where $E\sim\operatorname{Exp}(2)$ is independent of
$(B_{\mathrm{ts}},\cL)$.  The condition at $\widetilde z_k=(0,0)$ is
$E\ge0$ and is therefore automatic.

Apply \cref{lem:exp-diff} to \eqref{eq:shifted-tail-probability} with
\(U_i=\KPZ(x_i-x_k,t_i-t_k;h_\vee),\qquad
V=V_k,\)
and with thresholds $a_i-a_k$.  Since differentiation in $b$ is the
same as differentiation in $s=b-a_k$, evaluating at $b=0$ means
$s=-a_k$.  By \cite[Remark~2.11]{LZ25}, the boundary event in
\cref{lem:exp-diff} has probability zero, and hence
\begin{align}
-\frac12 T_{\rm shfit}'(0)=
 e^{2a_k}\E_{\Pp}\left[
 e^{2V_k}\1_{\{V_k<-a_k\}}
 \prod_{i\ne k}
 \1_{\{\KPZ(x_i-x_k,t_i-t_k;h_\vee)-V_k>a_i\}}
 \right].
\label{eq:shifted-tail-derivative}
\end{align}
Multiplying \eqref{eq:shifted-tail-derivative} by the shift factor
$e^{2t_k/3-2a_k}$ gives
\[
e^{2t_k/3}\E_{\Pp}\left[
 e^{2V_k}
 \prod_{i=1}^m
 \1_{\{\KPZ(x_i-x_k,t_i-t_k;h_\vee)-V_k>a_i\}}
\right],
\]
because the factor with $i=k$ is precisely
$\1_{\{-V_k>a_k\}}$.  Replacing strict inequalities by non-strict
ones using \cite[Remark~2.11]{LZ25} proves
\eqref{eq:alltime-tail-identity}.
\end{proof}

\begin{proof}[Proof of \cref{thm:alltime}]
	We first determine the normalization of the exponential weight.  Let
	$a_1,\ldots,a_m\downarrow-\infty$ in
	\eqref{eq:alltime-tail-identity}, the monotone convergence theorem yields
	\[
	1
	=
	e^{2t_k/3}
	\E_{\Pp}\left[e^{2V_k}\right].
	\]
	Recalling that $s_k=-t_k$, we obtain
	\begin{equation}\label{eq:exp-moment-proof}
		\E_{\Pp}\left[e^{2V_k}\right]
		=
		e^{2s_k/3}.
	\end{equation}
	Consequently,
	\[
	\E_{\Pp}\left[
	\exp\left\{
	2V_k-\frac23s_k
	\right\}
	\right]
	=
	e^{-2s_k/3}
	\E_{\Pp}\left[e^{2V_k}\right]
	=
	1.
	\]
	Thus \eqref{eq:Qk} defines a probability measure $\mathbb Q_k$
	absolutely continuous with respect to $\Pp$.
	
	Returning to \eqref{eq:alltime-tail-identity} and using
	$t_k=-s_k$, we have, for arbitrary $a_1,\ldots,a_m\in\R$,
	\begin{align}
		&\Pr\left(
		\bigcap_{i=1}^m
		\left\{\HUTz(z_i)\ge a_i\right\}
		\right)
		\nonumber\\
		&\qquad=
		\E_{\Pp}\left[
		\exp\left\{
		2V_k-\frac23s_k
		\right\}
		\prod_{i=1}^m
		\1_{\left\{
			\KPZ(x_i-x_k,t_i-t_k;h_\vee)-V_k\ge a_i
			\right\}}
		\right]
		\nonumber\\
		&\qquad=
		\mathbb Q_k\left(
		\bigcap_{i=1}^m
		\left\{
		\KPZ(x_i-x_k,t_i-t_k;h_\vee)-V_k\ge a_i
		\right\}
		\right).
		\label{eq:alltime-tail-under-Q}
	\end{align}
	Hence the two random vectors in
	\eqref{eq:alltime-representation} have identical joint upper-tail
	probabilities for every $(a_1,\ldots,a_m)\in\R^m$.  Since upper
	orthants
	\(	\prod_{i=1}^m[a_i,\infty),
	(a_1,\ldots,a_m)\in\R^m,\)
	form a measure-determining class on $\R^m$, the two random vectors have
	the same law.  This proves \eqref{eq:alltime-representation}.
\end{proof}

\section{Negative-time large-scale asymptotics}\label{sec:negative-asymptotics}

We prove \cref{prop:negative-longtime} from \cref{thm:alltime}.  This gives a probabilistic proof of \cite[Proposition~1.8(a)]{LZ25} without using the asymptotic analysis of the exact multipoint formula.  The only asymptotic information from integrable probability used below is the one-point GUE Tracy--Widom density estimate \eqref{eq:upper-tail-GUE}. The spatial input is the local Brownian comparison of \cite[Proposition~3.5]{GHZ25}, together with the tent estimate \cite[Corollary~2.21]{GHZ25} and the global parabolic bound \cite[Corollary~10.7]{DOV22}.

The exponential change of measure in the negative-time representation contains a contribution from the Brownian initial profile $h_\vee$ and a contribution from the directed landscape. We first isolate the Brownian part. For a fixed spatial point $y$, weighting the law of $h_\vee$ by $e^{2h_\vee(y)}$ changes the drift of the surrounding Brownian path but does not change its covariance. After subtracting the resulting deterministic tent profile, the increments are again those of a two-sided Brownian motion. The following lemma records this and will be used to identify the spatial fluctuations under the tilted measure. 

Write $B=B_{\mathrm{ts}}$, since \(h_\vee(y)=B(2y)-2|y|\) and $B(2y)\sim N(0,2|y|)$,
\[
\E_{\Pp}\left[e^{2h_\vee(y)}\right]
=
e^{-4|y|}
\E_{\Pp}\left[e^{2B(2y)}\right]
=
e^{-4|y|}e^{4|y|}
=
1.
\]
Hence
\begin{equation}\label{eq:Brownian-tilted-measure}
	\frac{d\Pp_y^B}{d\Pp}
	=
	e^{2h_\vee(y)}
\end{equation}
defines a probability measure on the Brownian environment. 

\begin{lemma}[Brownian exponential tilt]\label{lem:neg-Brownian-tilt}
	Under
	$\Pp_y^B$, the process
	\[
	u\longmapsto
	h_\vee(y+u)-h_\vee(y)+2|u|,
	\qquad u\in\R,
	\]
	is a two-sided Brownian motion with variance parameter $2$.
\end{lemma}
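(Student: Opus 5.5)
The plan is to apply the Cameron--Martin (Girsanov) theorem for a Gaussian exponential tilt to the two-sided Brownian motion $B=B_{\mathrm{ts}}$, and then to check by direct computation that the induced drift exactly cancels the deterministic tent correction. Since $h_\vee(y)=B(2y)-2|y|$, the density in \eqref{eq:Brownian-tilted-measure} is
\[
e^{2h_\vee(y)}=\exp\left\{2B(2y)-\tfrac12\operatorname{Var}(2B(2y))\right\},
\]
because $\operatorname{Var}(2B(2y))=8|y|$. This is a linear Gaussian exponential tilt. For a centered Gaussian process $X$ and a centered Gaussian variable $\xi$ in its linear span, tilting by $e^{\xi-\frac12\operatorname{Var}\xi}$ leaves the covariance unchanged and shifts the mean to $v\mapsto\Cov(X(v),\xi)$. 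I would verify this through characteristic functions or finite-dimensional Gaussian densities. Since continuity of paths is preserved under an absolutely continuous change of measure, the finite-dimensional statement identifies the law.

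First I compute the drift. For a two-sided Brownian motion, $\Cov(B(v),B(w))=\min\{|v|,|w|\}$ when $v,w$ have the same sign, and $0$ otherwise. Hence, under $\Pp_y^B$,
\[
B(v)=W(v)+2m_y(v),\qquad m_y(v)=\min\{|v|,2|y|\}\,\1_{\{vy>0\}},
\]
where $W$ is a two-sided standard Brownian motion. Next I substitute this into the target process:
\[
h_\vee(y+u)-h_\vee(y)+2|u|
=\bigl[W(2y+2u)-W(2y)\bigr]
+2\bigl[m_y(2y+2u)-m_y(2y)\bigr]-2|y+u|+2|y|+2|u|.
\]
The process $u\mapsto W(2y+2u)-W(2y)$ is a two-sided Brownian motion with variance parameter $2$, by independent increments and translation invariance of Brownian increments. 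It therefore remains to show that the deterministic part vanishes identically.

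This cancellation is the only step requiring care, and I expect it to be the main point, although it is elementary. The case $y=0$ is trivial: then $m_0\equiv0$ and the remaining terms cancel. For $y<0$, the reflection $B\mapsto B(-\cdot)$ reduces to the case $y>0$. For $y>0$ we have $m_y(v)=\min\{\max\{v,0\},2y\}$ and $m_y(2y)=2y$, and I split according to the position of $u$.
\begin{itemize}
\item For $u\ge0$, both the drift term and $-2(y+u)+2y+2u$ vanish.
\item For $-y\le u<0$, the drift term is $4u$ and the tent term is $-2(y+u)+2y-2u=-4u$.
\item For $u<-y$, the drift term is $-4y$ and the tent term is $2(y+u)+2y-2u=4y$.
\end{itemize}
In every case the sum is zero, which proves the lemma.
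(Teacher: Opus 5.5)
Your proposal is correct and follows the paper's route. Both tilt by $e^{2B(2y)-4|y|}$ via Cameron--Martin, so covariances are preserved and the mean shifts by $2\Cov(B(\cdot),B(2y))$, and both check the same three cases $u\ge0$, $-y\le u<0$, $u<-y$ to show the drift cancels the tent term $-2(|y+u|-|y|)+2|u|$. The only difference is presentation: you write $B=W+2m_y$ and use stationarity of two-sided Brownian increments, whereas the paper checks directly that $X_y$ is centered with covariance $2\min\{|u|,|v|\}\1_{\{uv\ge0\}}$.
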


\begin{proof}
	By the Cameron--Martin theorem, see
	\cite[Chapter~VIII, Section~1]{RY99}, the exponential tilt
	$e^{2B(2y)-4|y|}$ preserves the covariance of $B$ and changes its mean to
	\[
	\E_{\Pp_y^B}[B(t)]
	=
	2\Cov\left(B(t),B(2y)\right).
	\]
	Set
	\[
	X_y(u)
	=
	h_\vee(y+u)-h_\vee(y)+2|u|.
	\]
	Then
	\[
	X_y(u)
	=
	B(2y+2u)-B(2y)
	-2\left(|y+u|-|y|\right)
	+2|u|.
	\]
	
	We verify that $X_y$ is centered.  By symmetry, it is enough to take
	$y\ge0$.  The Cameron--Martin shift gives
	\[
	\E_{\Pp_y^B}
	\left[
	B(2y+2u)-B(2y)
	\right]
	=
	\begin{cases}
		0, & u\ge0,\\
		4u, & -y\le u<0,\\
		-4y, & u<-y.
	\end{cases}
	\]
	On the other hand,
	\[
	-2\left(|y+u|-|y|\right)+2|u|
	=
	\begin{cases}
		0, & u\ge0,\\
		-4u, & -y\le u<0,\\
		4y, & u<-y.
	\end{cases}
	\]
	Hence \(\E_{\Pp_y^B}[X_y(u)]=0, u\in\R.\)
	
	The exponential tilt does not change covariances.  Therefore
	\[
	\Cov_{\Pp_y^B}
	\left(
	X_y(u),X_y(v)
	\right)
	=
	\begin{cases}
		2\min\left\{|u|,|v|\right\}, & uv\ge0,\\
		0, & uv<0.
	\end{cases}
	\]
	Thus $X_y$ is a centered Gaussian process whose restrictions to
	$[0,\infty)$ and $(-\infty,0]$ are independent Brownian motions with
	variance parameter $2$.  This proves the claim.
\end{proof}

The second input concerns the one-point directed-landscape
height under the exponential change of measure.  By
\eqref{eq:landscape-one-point},
\(\cL(y,0;0,T)
\overset d=
T^{1/3}\chi-\frac{y^2}{T},
\chi\sim F_{\rm GUE}.\)
Consequently, the exponential weight
$e^{2\cL(y,0;0,T)}$ separates into the deterministic factor
$e^{-2y^2/T}$ and the factor $e^{2T^{1/3}\chi}$ depending on the
one-point height.  The following lemma identifies the fluctuations of
$\chi$ under this exponential tilt.  It will be used in the proof of
\cref{prop:negative-longtime} to obtain the Gaussian fluctuation of the
directed-landscape height on the scale $\sqrt T$.

\begin{lemma}[Exponentially tilted GUE law]\label{lem:neg-tilted-GUE}
	For $T>0$, define a probability measure $\mu_T$ on $\R$ by
	\begin{equation}\label{eq:GUE-tilted-measure}
		\mu_T(du)
		=
		\frac{
			e^{2T^{1/3}u}\,dF_{\rm GUE}(u)
		}{
			\displaystyle
			\int_{\R}e^{2T^{1/3}v}\,dF_{\rm GUE}(v)
		}.
	\end{equation}
	If $\chi_T$ has distribution $\mu_T$, then
	\begin{equation}\label{eq:neg-tilted-GUE}
		\frac{T^{1/3}\chi_T-T}{\sqrt T}
		\Longrightarrow
		N(0,1)
	\end{equation}
	in distribution as $T\to\infty$.
\end{lemma}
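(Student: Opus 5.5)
The proof is a Laplace-type analysis of the tilted density, using only the upper-tail density asymptotics \eqref{eq:upper-tail-GUE}. Set $\theta=2T^{1/3}$ and write $f=F_{\rm GUE}'$. The exponent $\theta u-\frac43u^{3/2}$ is maximized at $u_*=(\theta/2)^2=T^{2/3}$, where it equals $\frac23T$. Its second derivative there is $-u_*^{-1/2}=-T^{-1/3}$. The tilted variable should therefore concentrate at $\chi_T\approx T^{2/3}$ with fluctuations of order $T^{1/6}$. This matches the claim, since $T^{1/3}\chi_T-T\approx T^{1/3}(\chi_T-T^{2/3})$ and $T^{1/3}\cdot T^{1/6}=T^{1/2}$.

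Concretely, I will substitute $u=T^{2/3}+T^{1/6}w$. The key step is to show that the density of $W_T=(\chi_T-T^{2/3})/T^{1/6}$ converges pointwise, and in $L^1$, to the standard Gaussian density. Since $T^{1/3}\chi_T-T=\sqrt T\,W_T$ exactly, this gives \eqref{eq:neg-tilted-GUE}.

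The density of $W_T$ is proportional to $e^{\theta u}f(u)$ with $u=T^{2/3}+T^{1/6}w$. For $u\to\infty$, \eqref{eq:upper-tail-GUE} gives $f(u)=\frac{1}{8\pi u}e^{-\frac43u^{3/2}}(1+o(1))$. Taylor expanding $\frac43u^{3/2}$ around $u_*$ gives
\[
\theta u-\tfrac43u^{3/2}=\tfrac23T-\tfrac12w^2+R_T(w),
\qquad |R_T(w)|\le C|w|^3T^{-1/2}
\quad\text{for } |w|\le T^{1/6}.
\]
The prefactor $1/(8\pi u)$ equals $(8\pi T^{2/3})^{-1}(1+o(1))$ uniformly for $|w|\le T^{1/12}$. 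Hence, after normalization, the density converges to $e^{-w^2/2}/\sqrt{2\pi}$ uniformly on compacts.

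To make this rigorous I need a dominating bound on the remaining region, split into three ranges.

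\textbf{Window $|w|\le T^{1/12}$.} Here the expansion above applies, with $|R_T|\le CT^{-1/4}$.

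\textbf{Intermediate range, $u\ge T^{2/3}/2$ outside the window.} The exponent $\theta u-\frac43u^{3/2}$ is concave in $u$ for $u>0$. It therefore lies below its quadratic expansion with the smaller curvature $\min_{u\ge u_*/2}\frac{d^2}{du^2}\frac43u^{3/2}\gtrsim T^{-1/3}$. This gives the bound $e^{\frac23T-cw^2}$ after rescaling, which is integrable in $w$ and negligible outside the window.

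\textbf{Lower range $u\le T^{2/3}/2$, including $u\to-\infty$.} Here I bound $\int e^{\theta u}\,dF_{\rm GUE}$ crudely. On $u\le M$ it is at most $e^{\theta M}$. On $M\le u\le T^{2/3}/2$ I use $f(u)\le Ce^{-\frac43u^{3/2}}$ and the fact that the exponent there is at most its value at $u_*/2$, which is $\frac23T\cdot\kappa$ for some $\kappa<1$ by strict concavity. Both contributions are $e^{-cT}$ times the main mass $\asymp T^{-2/3}T^{1/6}e^{\frac23T}$.

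Normalizing yields the local limit, and Scheffé's lemma gives convergence in distribution.

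The main obstacle is the uniform tail control. The asymptotics \eqref{eq:upper-tail-GUE} hold only for large $L$, so the error $1+O(u^{-3/2})$ must be used uniformly on $u\ge T^{2/3}/2$ (where it is $1+O(T^{-1})$). The bounded and negative regions also need a separate crude bound, and the plan is to use only $\int_{u\le M}e^{\theta u}\,dF\le e^{\theta M}$ there. Everything else is a routine Laplace-method computation.
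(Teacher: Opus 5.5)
Your proposal is correct and follows essentially the same Laplace-method argument as the paper: the substitution $u=T^{2/3}+T^{1/6}w$, the local Gaussian expansion of $e^{2T^{1/3}u}f_{\rm GUE}(u)$ from the density asymptotics, concavity of $\phi(s)=2s-\frac43s^{3/2}$ for the tails, and the crude bound $e^{2T^{1/3}u_0}$ below a fixed level, with Scheffé's lemma replacing the paper's test-function conclusion. One minor fix: the curvature $u^{-1/2}$ has infimum $0$ on $u\ge u_*/2$, so the uniform bound $e^{\frac23T-cw^2}$ only holds for $u\le Ku_*$ with $K$ fixed. Beyond that, use the linear decay from the tangent line that concavity provides, which is what the paper implicitly uses through $\phi(s)\le\frac23-c$ for $|s-1|\ge\eta$.
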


\begin{proof}
	Write $f_{\rm GUE}=F_{\rm GUE}'$ and let 
	\(	Z_T
	=
	\int_{\R}
	e^{2T^{1/3}u}f_{\rm GUE}(u)\,du\)
	be the normalizing constant in \eqref{eq:GUE-tilted-measure}.  Set \(	u=T^{2/3}+T^{1/6}v.\)
	For $v$ in a fixed compact set, $u\to\infty$ uniformly as
	$T\to\infty$.  By \eqref{eq:upper-tail-GUE},
	\[
	f_{\rm GUE}(u)
	=
	\frac{1+O(u^{-3/2})}{8\pi u}
	\exp\left\{
	-\frac43u^{3/2}
	\right\}.
	\]

	A Taylor expansion at $1$ gives
	\begin{equation}\label{eq:laplace-local}
		2T^{1/3}u-\frac43u^{3/2}
		=
		\frac23T-\frac12v^2
		+O\left(T^{-1/2}|v|^3\right),
	\end{equation}
	locally uniformly in $v$.  Moreover, \(	\frac{T^{1/6}}{u}
	=
	T^{-1/2}
	\left(1+O(T^{-1/2}|v|)\right).\)
	Consequently, for every fixed $M<\infty$,
	\begin{equation}\label{eq:GUE-local-density}
		e^{2T^{1/3}u}
		f_{\rm GUE}(u)T^{1/6}
		=
		\frac{e^{2T/3}}{8\pi T^{1/2}}
		e^{-v^2/2}
		\left(1+o(1)\right)
	\end{equation}
	uniformly for $|v|\le M$.
	
	It remains to control the contribution away from the saddle point.
	For $s\ge0$, set \(	\phi(s)=2s-\frac43s^{3/2}.\)
	Then \(	\phi'(s)=2-2s^{1/2},
	\phi''(s)=-s^{-1/2},\)
	so $s=1$ is the unique maximizer of $\phi$ on $[0,\infty)$ and
	$\phi(1)=2/3$.  Hence there exist $c,\eta>0$ such that
	\[
	\phi(s)
	\le
	\frac23-c(s-1)^2
	\qquad
	\text{for }|s-1|\le\eta,
	\]
	and
	\[
	\phi(s)
	\le
	\frac23-c
	\qquad
	\text{for }|s-1|\ge\eta.
	\]
	When $u=T^{2/3}s$, \(	2T^{1/3}u-\frac43u^{3/2}=T\phi(s),v=T^{1/2}(s-1).\)
	Thus, in the region $|s-1|\le\eta$, \(	T\phi(s)\le\frac23T-cv^2.\)
	Using the upper bound for $f_{\rm GUE}$ that follows from
	\eqref{eq:upper-tail-GUE}, we obtain, for every $M\ge1$,
	\begin{equation}\label{eq:GUE-tail-local}
		\int_{\substack{|v|>M\\ |s-1|\le\eta}}
		e^{2T^{1/3}u}f_{\rm GUE}(u)T^{1/6}\,dv
		\le
		C\frac{e^{2T/3}}{T^{1/2}}
		\int_{|v|>M}e^{-cv^2}\,dv.
	\end{equation}
	On the region $|s-1|\ge\eta$ with $u$ sufficiently large, the second bound on $\phi$ gives
	\begin{equation}\label{eq:GUE-tail-far}
		\int_{\substack{|s-1|\ge\eta\\ u\ge u_0}}
		e^{2T^{1/3}u}f_{\rm GUE}(u)\,du
		\le
		Ce^{(2/3-c)T}.
	\end{equation}
	For $u<u_0$, where $u_0$ is fixed,
	\[
	\int_{-\infty}^{u_0}
	e^{2T^{1/3}u}f_{\rm GUE}(u)\,du
	\le
	e^{2T^{1/3}u_0}
	=
	o\left(
	\frac{e^{2T/3}}{T^{1/2}}
	\right).
	\]
	Together with \eqref{eq:GUE-local-density}, these estimates imply
	\begin{equation}\label{eq:GUE-tail-tight}
		\limsup_{T\to\infty}
		\frac{
			\displaystyle
			\int_{|v|>M}
			e^{2T^{1/3}u}f_{\rm GUE}(u)T^{1/6}\,dv
		}{
			Z_T
		}
		\le
		Ce^{-cM^2}.
	\end{equation}
	
	Finally, for every bounded continuous function $\psi$,
	\eqref{eq:GUE-local-density} and \eqref{eq:GUE-tail-tight} give
	\[
	\lim_{T\to\infty}
	\int_{\R}
	\psi\left(
	\frac{T^{1/3}u-T}{\sqrt T}
	\right)
	\mu_T(du)
	=
	\frac{1}{\sqrt{2\pi}}
	\int_{\R}\psi(v)e^{-v^2/2}\,dv.
	\]
	This proves \eqref{eq:neg-tilted-GUE}.
\end{proof}

The next lemma is the only place where the local Brownian structure of the directed landscape is used.  It converts the exponential tilt of the variational maximum into the corresponding exponential weight along a finite directed-landscape chain.

For the remainder of this subsection, fix
\[
R=r_0>r_1>\cdots>r_n>r_{n+1}=0,
\qquad
s_j=\lambda(R-r_j),
\]
and set $y_{n+1}=0$.  Write
\[
\Delta_\ell=s_\ell-s_{\ell-1},
\qquad 1\le\ell\le n+1.
\]
For $y=(y_0,\ldots,y_n)\in\R^{n+1}$, define the chain weight
\begin{equation}\label{eq:chain-weight}
	W_\lambda(y)
	=
	\exp\left\{
	2h_\vee(y_0)
	+
	2\sum_{\ell=1}^{n+1}\cL(y_{\ell-1},s_{\ell-1};y_\ell,s_\ell)
	\right\},
	\qquad
	\mathcal Z_\lambda
	=
	\int_{\R^{n+1}}W_\lambda(y)\,dy.
\end{equation}
We also introduce the probability measure
\begin{equation}\label{eq:chain-tilted-measure}
	d\widehat{\Pp}_\lambda
	=
	\frac{W_\lambda(y)}
	{\E_{\Pp}[\mathcal Z_\lambda]}
	dy\,d\Pp
\end{equation}
on the environment together with the chain $y$.  The Gaussian endpoint
factorization below shows that $0<\E_{\Pp}[\mathcal Z_\lambda]<\infty$,
so \eqref{eq:chain-tilted-measure} is well defined.
Recall that, for $t>0$, \(V_t=\KPZ(0,t;h_\vee).\)  We write
\[
\frac{d\mathbb Q_t}{d\Pp}=\exp\left\{2V_t-\frac23t\right\}.
\]
For the finitely many prescribed $x_{j,a}\in\R$, set
\begin{equation}\label{eq:chain-coordinates}
	X_{j,a}^{(\lambda)}
	=
	\frac{
		\KPZ(\lambda^{1/2}x_{j,a}/\sqrt2,s_j;h_\vee)
		-V_{\lambda R}+\lambda r_j
	}{
		\sqrt{2\lambda}
	},
\end{equation}
and
\begin{equation}\label{eq:chain-replacement-coordinate}
	Z_{j,a}^{(\lambda)}(y)
	=
	\frac{
		\lambda r_j-\sum_{\ell=j+1}^{n+1}\cL(y_{\ell-1},s_{\ell-1};y_\ell,s_\ell)
	}{
		\sqrt{2\lambda}
	}
	-
	\left|
	x_{j,a}-\frac{\sqrt2\,y_j}{\sqrt\lambda}
	\right|.
\end{equation}

\begin{lemma}[Finite-chain replacement]\label{lem:neg-chain-replacement}
	For every bounded Lipschitz function $F$,
	\begin{equation}\label{eq:neg-chain-replacement}
		\E_{\mathbb Q_{\lambda R}}
		\left[
		F\left(
		\left\{
		X_{j,a}^{(\lambda)}
		\right\}_{j,a}
		\right)
		\right]
		=
		\E_{\widehat{\Pp}_\lambda}
		\left[
		F\left(
		\left\{
		Z_{j,a}^{(\lambda)}(y)
		\right\}_{j,a}
		\right)
		\right]
		+o(1)
	\end{equation}
	as $\lambda\to\infty$.
\end{lemma}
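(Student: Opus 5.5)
The idea is to compare both sides of \eqref{eq:neg-chain-replacement} through a common Laplace-type representation. The left side is $\E_{\Pp}[e^{2V_{\lambda R}-\frac23\lambda R}F(X^{(\lambda)})]$. The right side is $\E_{\Pp}[\int W_\lambda(y)F(Z^{(\lambda)}(y))\,dy]/\E_{\Pp}[\mathcal Z_\lambda]$.

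\emph{Step 1: chain variational formula.} Iterating the metric composition \eqref{eq:metric-composition} at the times $s_1,\dots,s_n$ gives
\[
V_{\lambda R}=\sup_{y\in\R^{n+1}}\Big\{h_\vee(y_0)+\sum_{\ell=1}^{n+1}\cL(y_{\ell-1},s_{\ell-1};y_\ell,s_\ell)\Big\},
\]
with an almost surely unique maximizing chain $y^\ast$. The factor $\frac12$ in $W_\lambda$ is absorbed into the definitions: $W_\lambda=e^{2\Phi}$, where $\Phi$ is the chain functional above. We then write
\[
\mathcal Z_\lambda=e^{2V_{\lambda R}}\,\Theta_\lambda,\qquad \Theta_\lambda=\int_{\R^{n+1}}e^{2(\Phi(y)-V_{\lambda R})}\,dy,
\]
and
\[
\int W_\lambda(y)F(Z^{(\lambda)}(y))\,dy=e^{2V_{\lambda R}}\,\Theta_\lambda\,\overline{F}_\lambda,
\]
where $\overline F_\lambda$ is the average of $F(Z^{(\lambda)}(y))$ against the probability density $e^{2(\Phi-V_{\lambda R})}/\Theta_\lambda$. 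The lemma then reduces to two claims, both to be proved under $\mathbb Q_{\lambda R}$.

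\emph{Claim (a).} $\overline F_\lambda-F(X^{(\lambda)})\to0$ in $\mathbb Q_{\lambda R}$-probability.

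\emph{Claim (b).} $\Theta_\lambda$ is tight and bounded away from zero, and it is asymptotically independent of $X^{(\lambda)}$. Precisely, $\E_{\mathbb Q}[\Theta_\lambda G(X^{(\lambda)})]=\E_{\mathbb Q}[\Theta_\lambda]\,\E_{\mathbb Q}[G(X^{(\lambda)})]+o(\E_{\mathbb Q}[\Theta_\lambda])$ for bounded Lipschitz $G$.

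Given (a) and (b), the right side of \eqref{eq:neg-chain-replacement} equals $\E_{\mathbb Q}[\Theta_\lambda F(X)]/\E_{\mathbb Q}[\Theta_\lambda]+o(1)=\E_{\mathbb Q}[F(X)]+o(1)$. Uniform integrability of $\Theta_\lambda$ must also be checked so that the errors in (a) can be weighted by $\Theta_\lambda$.

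\emph{Step 2: local tent picture, giving (a).} Under the tilt, \cref{lem:neg-tilted-GUE} applied to each increment, together with the Gaussian endpoint factorization, shows that the partial heights $\KPZ(y^\ast_j,s_j;h_\vee)$ sit at heights $\approx s_j$. These are upper-tail heights for the corresponding landscape increments, with fluctuations of order $\sqrt\lambda$. At such a height, the tent estimate \cite[Corollary~2.21]{GHZ25} shows that the spatial profile at time $s_j$ is a tent of slope $\approx2$ around $y^\ast_j$. The Brownian comparison \cite[Proposition~3.5]{GHZ25}, and \cref{lem:neg-Brownian-tilt} for the time-zero factor $h_\vee$, give rate-$2$ Brownian fluctuations around that tent.

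Consequently
\[
\KPZ(\lambda^{1/2}x_{j,a}/\sqrt2,s_j;h_\vee)=\KPZ(y^\ast_j,s_j;h_\vee)-2\,|\lambda^{1/2}x_{j,a}/\sqrt2-y^\ast_j|+O(\lambda^{1/4}).
\]
Subtracting $V_{\lambda R}=\KPZ(y^\ast_j,s_j;h_\vee)+\sum_{\ell>j}\cL(y^\ast_{\ell-1},s_{\ell-1};y^\ast_\ell,s_\ell)$ and dividing by $\sqrt{2\lambda}$ yields $X^{(\lambda)}_{j,a}=Z^{(\lambda)}_{j,a}(y^\ast)+o(1)$.

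The same tent picture shows that the density $e^{2(\Phi-V_{\lambda R})}$ decays like $e^{-c|y-y^\ast|}$ in each coordinate. Hence it concentrates on $|y-y^\ast|=O(1)$, which is negligible on the spatial scale $\sqrt\lambda$. The landscape increments vary by $O(1)$ there, so $Z^{(\lambda)}(y)-Z^{(\lambda)}(y^\ast)=O(\lambda^{-1/2})$ on the bulk of the weight. Since $F$ is Lipschitz, this gives (a). Far-away chains, with $|y-y^\ast|\gg1$ or $|y|$ beyond the tent regime, are removed with the dyadic-annulus and parabolic arguments from the proof of \cref{prop:III}, using \eqref{eq:global-parabolic}.

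\emph{Step 3: decoupling, giving (b). This is the main obstacle.} The variable $\Theta_\lambda$ is a functional of the $O(1)$-neighbourhood of $y^\ast$. By \cite[Proposition~3.5]{GHZ25}, conditionally on macroscopic endpoint data it is a functional of independent two-sided Brownian motions with drift $-2$. By \cref{lem:neg-Brownian-tilt}, the $h_\vee$ part is of the same form. So the approach is to condition on the values at distance $H\gg1$ (but $H\ll\sqrt\lambda$) from each $y^\ast_j$. Given these, $\Theta_\lambda$ is, up to $e^{-cH}$ errors, a function of independent Brownian pieces whose law does not depend on the conditioning. Meanwhile $X^{(\lambda)}$ depends on the local pieces only through $o(1)$ terms. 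The delicate point is that Proposition~3.5 is stated for a single upper-tail conditioning, whereas here the tilt acts jointly on $n+1$ increments. The plan for this is to apply it increment by increment, using independence of the landscape over the disjoint intervals $[s_{\ell-1},s_\ell]$. Each increment is tilted by $e^{2\cL}$, which \cref{lem:neg-tilted-GUE} shows to be a mixture of upper-tail conditionings at heights $\asymp\Delta_\ell$. One then has to check that the Radon--Nikodym errors $1+O(e^{-cL})$ remain summable under this exponential weighting.
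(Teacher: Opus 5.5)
Your reduction is algebraically correct. With $\mathcal Z_\lambda=e^{2V_{\lambda R}}\Theta_\lambda$, the right side of \eqref{eq:neg-chain-replacement} equals $\E_{\mathbb Q}[\Theta_\lambda\overline F_\lambda]/\E_{\mathbb Q}[\Theta_\lambda]$, and $\Theta_\lambda$ is, up to a constant, the reciprocal of the paper's residual factor $\rho_\lambda$.

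Claim (b), however, is not a side condition. Since $\E_{\mathbb Q}[\Theta_\lambda G(X^{(\lambda)})]/\E_{\mathbb Q}[\Theta_\lambda]=\E_{\widehat\Pp_\lambda}[G(X^{(\lambda)})]$, it says exactly that $X^{(\lambda)}$ has asymptotically the same law under $\mathbb Q_{\lambda R}$ and under $\widehat\Pp_\lambda$. That is the heart of the lemma.

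The genuine gap is that every tool you use to prove (a) and (b) is valid under $\widehat\Pp_\lambda$, not under $\mathbb Q_{\lambda R}$. The tilted-GUE law of each increment, the Gaussian endpoint factorization, the independence of the slabs, and the Cameron--Martin description of $h_\vee$ all rest on one fact: for a \emph{fixed} chain $y$, the weight $W_\lambda(y)$ factorizes over independent pieces, and Fubini then applies. These facts describe $\widehat\Pp_\lambda$ with $y$ a Gibbs-sampled chain. Under $\mathbb Q_{\lambda R}$ the density is $e^{2\sup_y\Phi_\lambda(y)}$, which couples all slabs through the supremum. Along the argmax $y^\ast$ the increments are not $e^{2\cL}$-tilted with fixed endpoints. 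So neither of the following is justified:
\begin{itemize}
\item your Step~2 claim that $\KPZ(y^\ast_j,s_j;h_\vee)\approx s_j$ with tilted-GUE fluctuations;
\item your Step~3 plan to apply \cite[Proposition~3.5]{GHZ25} slab by slab using independence over disjoint time intervals.
\end{itemize}
Conditioning on values at distance $H$ from $y^\ast_j$ also does not leave independent Brownian pieces inside, because $y^\ast$ is determined by those interior pieces. No Gibbs-type resampling is available around the argmax.

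Any transfer of the explicit structure from $\widehat\Pp_\lambda$ to $\mathbb Q_{\lambda R}$ must go through $d\mathbb Q_{\lambda R}/d\widehat\Pp_\lambda\propto e^{2V_{\lambda R}}/\mathcal Z_\lambda$. You would therefore need uniform integrability of this ratio under $\widehat\Pp_\lambda$, which is a lower-moment bound on $\Theta_\lambda$. That is in addition to the upper-moment bound on $\Theta_\lambda$ under $\mathbb Q$ that you list.

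The paper avoids analyzing $\mathbb Q_{\lambda R}$ altogether:
\begin{itemize}
\item It writes $\E_{\mathbb Q}[F(X)]=\E_{\widehat\Pp_\lambda}[\rho_\lambda F(X)]$.
\item It proves $X\approx Z(y)$ under $\widehat\Pp_\lambda$; this is the $\widehat\Pp_\lambda$-version of your (a).
\item It shows $\E_{\widehat\Pp_\lambda}[\rho_\lambda\mid\mathcal G_\lambda]\to1$ in $L^1$, where $\mathcal G_\lambda$ is generated by the endpoint increments and the centered heights $\Gamma_\ell$. Conditionally on $\mathcal G_\lambda$, the initial profile and the $n+1$ slabs are independent. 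Each slab is the landscape conditioned on its point-to-point value, which is exactly the setting of \cite[Proposition~3.5]{GHZ25}.
\item It gets uniform integrability of $\rho_\lambda$ from the unit-cube bound $e^{2V_{\lambda R}}/\mathcal Z_\lambda\le\exp\{2\sup_{z,z'\in Q(z^\ast)}|\Phi_\lambda(z)-\Phi_\lambda(z')|\}$, together with the explicit normalization $\E_{\widehat\Pp_\lambda}[e^{2V_{\lambda R}}/\mathcal Z_\lambda]=8^{n+1}(1+o(1))$.
\end{itemize}
To repair your argument, carry out (a) and (b) under $\widehat\Pp_\lambda$ in this way rather than under $\mathbb Q_{\lambda R}$.
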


\begin{proof}
	Define the normalized residual factor
	\begin{equation}\label{eq:chain-residual-factor}
		\rho_\lambda
		=
		\frac{
			e^{2V_{\lambda R}}/\mathcal Z_\lambda
		}{
			\E_{\widehat{\Pp}_\lambda}
			\left[e^{2V_{\lambda R}}/\mathcal Z_\lambda\right]
		}.
	\end{equation}
	A direct substitution of \eqref{eq:chain-tilted-measure}, together with
	$d\mathbb Q_{\lambda R}/d\Pp
	=e^{2V_{\lambda R}-2\lambda R/3}$, gives
	\begin{equation}\label{eq:Q-chain-exact}
		\E_{\mathbb Q_{\lambda R}}
		\left[
		F\left(
		\left\{
		X_{j,a}^{(\lambda)}
		\right\}_{j,a}
		\right)
		\right]
		=
		\E_{\widehat{\Pp}_\lambda}
		\left[
		\rho_\lambda
		F\left(
		\left\{
		X_{j,a}^{(\lambda)}
		\right\}_{j,a}
		\right)
		\right].
	\end{equation}
	
We next identify the distribution of the chain under
$\widehat{\Pp}_\lambda$.  For fixed
$y=(y_0,\ldots,y_n)$, independence of the Brownian initial profile and
the directed-landscape increments on the disjoint time intervals
$[s_{\ell-1},s_\ell]$ gives
\[
\E_{\Pp}\left[W_\lambda(y)\right]
=
\E_{\Pp}\left[e^{2h_\vee(y_0)}\right]
\prod_{\ell=1}^{n+1}
\E_{\Pp}\left[e^{2\cL(y_{\ell-1},s_{\ell-1};y_\ell,s_\ell)}\right].
\]
The first factor equals $1$ by
\cref{lem:neg-Brownian-tilt}.  By
\eqref{eq:landscape-one-point},
\[
\cL(y_{\ell-1},s_{\ell-1};y_\ell,s_\ell)
\overset d=
\Delta_\ell^{1/3}\chi_\ell
-
\frac{(y_\ell-y_{\ell-1})^2}{\Delta_\ell},
\qquad
\chi_\ell\sim F_{\rm GUE}.
\]
Hence
\[
\E_{\Pp}\left[e^{2\cL(y_{\ell-1},s_{\ell-1};y_\ell,s_\ell)}\right]
=
\exp\left\{
-2\frac{(y_\ell-y_{\ell-1})^2}{\Delta_\ell}
\right\}
\E\left[e^{2\Delta_\ell^{1/3}\chi_\ell}\right],
\]
where the last factor is independent of the endpoints.  It follows from
\eqref{eq:chain-tilted-measure} that the endpoint marginal of
$\widehat{\Pp}_\lambda$ has density proportional to
\begin{equation}\label{eq:chain-endpoint-density}
	\prod_{\ell=1}^{n+1}
	\exp\left\{
	-2\frac{(y_\ell-y_{\ell-1})^2}{\Delta_\ell}
	\right\}.
\end{equation}

Equivalently, under this marginal the increments
$y_\ell-y_{\ell-1}$, $1\le\ell\le n+1$, are independent centered
Gaussian random variables with variances $\Delta_\ell/4$.  Since
$y_{n+1}=0$,
\[
y_j
=
-\sum_{\ell=j+1}^{n+1}
(y_\ell-y_{\ell-1}),
\qquad
\operatorname{Var}_{\widehat{\Pp}_\lambda}(y_j)
=
\frac14\sum_{\ell=j+1}^{n+1}\Delta_\ell
=
\frac{\lambda r_j}{4}.
\]
The number of chain points is fixed, and therefore
\begin{equation}\label{eq:chain-endpoint-tight}
	\lim_{M\to\infty}\limsup_{\lambda\to\infty}
	\widehat{\Pp}_\lambda
	\left(
	\max_{0\le j\le n}
	\frac{|y_j|}{\sqrt\lambda}>M
	\right)
	=0.
\end{equation}

We next consider the passage values along the chain.  Set \(\Gamma_\ell(y)=\cL(y_{\ell-1},s_{\ell-1};y_\ell,s_\ell)+\frac{(y_\ell-y_{\ell-1})^2}{\Delta_\ell}.\)
Conditionally on the endpoints, the independent-increment property of
the directed landscape and the factorization of $W_\lambda$ show that
$\Gamma_1,\ldots,\Gamma_{n+1}$ remain independent under
$\widehat{\Pp}_\lambda$.  Moreover, the conditional law of
$\Gamma_\ell$ is the exponential tilt of
$\Delta_\ell^{1/3}\chi_\ell$ by the weight
$e^{2\Delta_\ell^{1/3}\chi_\ell}$.  Therefore
\cref{lem:neg-tilted-GUE}, applied with $T=\Delta_\ell$, gives
\begin{equation}\label{eq:chain-height-limit}
	\left\{
	\frac{\Gamma_\ell(y)-\Delta_\ell}
	{\sqrt{\Delta_\ell}}
	\right\}_{\ell=1}^{n+1}
	\Longrightarrow
	\left\{
	G_\ell
	\right\}_{\ell=1}^{n+1},
\end{equation}
where $G_1,\ldots,G_{n+1}$ are independent standard Gaussian random
variables.  Since
$\Delta_\ell=\lambda(r_{\ell-1}-r_\ell)$ and the number of slabs is
fixed, in particular
\begin{equation}\label{eq:chain-height-window}
	\Gamma_\ell(y)
	=
	\Delta_\ell
	+
	O_{\widehat{\Pp}_\lambda}(\sqrt\lambda),
	\qquad
	1\le\ell\le n+1.
\end{equation}

For $M<\infty$, set
\[
\mathcal A_{\lambda,M}
=
\left\{
\max_{0\le j\le n}|y_j|\le M\sqrt\lambda,\quad
\max_{1\le\ell\le n+1}|\Gamma_\ell-\Delta_\ell|
\le M\sqrt\lambda
\right\}.
\]
By \eqref{eq:chain-endpoint-tight} and
\eqref{eq:chain-height-window},
\begin{equation}\label{eq:chain-good-event}
\lim_{M\to\infty}\limsup_{\lambda\to\infty}
\widehat{\Pp}_\lambda(\mathcal A_{\lambda,M}^c)=0.
\end{equation}
Fix $M$.  On $\mathcal A_{\lambda,M}$ one has
$\Delta_\ell\asymp\lambda$,
$|y_\ell-y_{\ell-1}|=O(\sqrt\lambda)$, and
\(\Delta_\ell^{-1/3}\Gamma_\ell
=
\Delta_\ell^{2/3}+O(\lambda^{1/6}).\)
After translating the slab endpoints, applying the shift invariance of
\cite[Theorem~3.3]{GHZ25}, and then using $1{:}2{:}3$ scaling, each
slab is therefore in the deep upper-tail regime of \cite{GHZ25},
uniformly on $\mathcal A_{\lambda,M}$.

The initial Brownian profile must be treated separately from the
landscape slabs.  Conditional on $y_0$, its law under
$\widehat{\Pp}_\lambda$ is tilted by $e^{2h_\vee(y_0)}$.  Hence
\cref{lem:neg-Brownian-tilt} gives
\begin{equation}\label{eq:chain-initial-tent}
 h_\vee(y_0+u)-h_\vee(y_0)
 =-2|u|+\widetilde B_0(u),
\end{equation}
where $\widetilde B_0$ is a two-sided Brownian motion with variance
parameter $2$.  In particular, with
$\ell_\lambda=(\log\lambda)^2$,
\begin{equation}\label{eq:chain-initial-local}
 \sup_{|u|\le\ell_\lambda}|\widetilde B_0(u)|
 =O_{\widehat{\Pp}_\lambda}(\log\lambda)
 =o_{\widehat{\Pp}_\lambda}(\sqrt\lambda).
\end{equation}

For the landscape slabs, Proposition~3.5 and Corollary~2.21 of
\cite{GHZ25} give, uniformly on $\mathcal A_{\lambda,M}$, the
corresponding two-sided tent approximation.  More precisely, whenever
$|u|,|v|\le\ell_\lambda$,
\begin{equation}\label{eq:chain-slab-local}
\cL(y_{\ell-1}+u,s_{\ell-1};y_\ell+v,s_\ell)
-
\cL(y_{\ell-1},s_{\ell-1};y_\ell,s_\ell)
+
2|u|+2|v|
=
o_{\widehat{\Pp}_\lambda}(\sqrt\lambda)
\end{equation}
uniformly over $1\le\ell\le n+1$.  In the Brownian comparison model, the fluctuation in
\eqref{eq:chain-initial-tent} and the bridge fluctuations associated with
\eqref{eq:chain-slab-local} are independent across the disjoint slabs.

Equations \eqref{eq:chain-initial-tent}--\eqref{eq:chain-slab-local},
together with metric composition, imply the following two estimates.
For
$\xi_{j,a}=\lambda^{1/2}x_{j,a}/\sqrt2$ and every $\eps>0$,
\begin{align}
&\widehat{\Pp}_\lambda\left(
\mathcal A_{\lambda,M}\cap
\left\{
\max_{j,a}
\left|
\KPZ(\xi_{j,a},s_j;h_\vee)
-h_\vee(y_0)-\sum_{\ell=1}^j\cL(y_{\ell-1},s_{\ell-1};y_\ell,s_\ell)
+2|\xi_{j,a}-y_j|
\right|>\eps\sqrt\lambda
\right\}
\right)
\rightarrow0,
\label{eq:chain-profile-tent}
\\
&\widehat{\Pp}_\lambda\left(
\mathcal A_{\lambda,M}\cap
\left\{
V_{\lambda R}
-h_\vee(y_0)-\sum_{\ell=1}^{n+1}\cL(y_{\ell-1},s_{\ell-1};y_\ell,s_\ell)
>\eps\sqrt\lambda
\right\}
\right)
\rightarrow0.
\label{eq:neg-max-gap}
\end{align}
Indeed, first restrict the intermediate variational points to
$|z_j-y_j|\le\ell_\lambda$.  On this set,
\eqref{eq:chain-initial-local} and \eqref{eq:chain-slab-local} show
that the value of the variational functional differs from the chain
value by the deterministic tent loss, up to
$o_{\widehat{\Pp}_\lambda}(\sqrt\lambda)$.  Taking the supremum gives
the compact-window parts of \eqref{eq:chain-profile-tent} and
\eqref{eq:neg-max-gap}.  If one of the intermediate points leaves this
window, \eqref{eq:chain-initial-tent} for the initial profile and
\cite[Corollary~2.21]{GHZ25} for the landscape slabs give a linear loss
that dominates the Brownian fluctuations up to the edge of the
upper-tail window.  Beyond that window the global parabolic estimate
\cite[Corollary~10.7]{DOV22} gives quadratic decay.  The probability
that a point outside the localized region improves the chain value by
$\eps\sqrt\lambda$ therefore tends to zero.  This proves
\eqref{eq:chain-profile-tent} and \eqref{eq:neg-max-gap}.

Using the definitions of $X_{j,a}^{(\lambda)}$ and
$Z_{j,a}^{(\lambda)}(y)$, we have the exact identity
\begin{align}
X_{j,a}^{(\lambda)}-Z_{j,a}^{(\lambda)}(y)
&=
\frac{
\KPZ(\xi_{j,a},s_j;h_\vee)
-h_\vee(y_0)-\sum_{\ell=1}^j\cL(y_{\ell-1},s_{\ell-1};y_\ell,s_\ell)
+2|\xi_{j,a}-y_j|
}{\sqrt{2\lambda}}
\nonumber\\
&\quad-
\frac{
V_{\lambda R}
-h_\vee(y_0)-\sum_{\ell=1}^{n+1}\cL(y_{\ell-1},s_{\ell-1};y_\ell,s_\ell)
}{\sqrt{2\lambda}}.
\label{eq:chain-coordinate-identity}
\end{align}
By \eqref{eq:chain-good-event}--\eqref{eq:neg-max-gap},
\begin{equation}\label{eq:chain-approximation}
\max_{j,a}
\left|X_{j,a}^{(\lambda)}-Z_{j,a}^{(\lambda)}(y)\right|
\longrightarrow0
\end{equation}
in $\widehat{\Pp}_\lambda$-probability.

It remains to remove the factor $\rho_\lambda$.  Let
\[
\mathcal G_\lambda
=
\sigma\left(
\left\{
 y_\ell-y_{\ell-1},\,\Gamma_\ell(y)
\right\}_{\ell=1}^{n+1}
\right).
\]
Since $y_{n+1}=0$,
\[
y_j=-\sum_{\ell=j+1}^{n+1}(y_\ell-y_{\ell-1}),
\qquad
\cL(y_{\ell-1},s_{\ell-1};y_\ell,s_\ell)
=
\Gamma_\ell-\frac{(y_\ell-y_{\ell-1})^2}{\Delta_\ell},
\]
so $\{Z_{j,a}^{(\lambda)}(y)\}_{j,a}$ is
$\mathcal G_\lambda$-measurable.

We first prove the conditional decoupling
\begin{equation}\label{eq:rho-conditional}
\left\|
\E_{\widehat{\Pp}_\lambda}
\left[\rho_\lambda\mid\mathcal G_\lambda\right]-1
\right\|_{L^1(\widehat{\Pp}_\lambda)}
\longrightarrow0.
\end{equation}
Fix $M,K<\infty$ and work on $\mathcal A_{\lambda,M}$.  Conditional on
$\mathcal G_\lambda$, the Brownian initial environment and the directed
landscape on the $n+1$ disjoint slabs remain independent.  The former
has the tilted law of \cref{lem:neg-Brownian-tilt}.  On the $\ell$th
slab, conditioning on $\Gamma_\ell$ removes the exponential factor
$e^{2\cL(y_{\ell-1},s_{\ell-1};y_\ell,s_\ell)}$, so the remaining conditional law is the original
landscape law conditioned on its centered passage height.

To apply \cite[Proposition~3.5]{GHZ25} under this exact conditioning,
first condition on intervals of width $\delta>0$ containing the
finitely many values $\Gamma_\ell$.  The comparison in
\cite[Proposition~3.5]{GHZ25} is uniform over the height windows in
$\mathcal A_{\lambda,M}$, and in the comparison model the Brownian
bridges are independent of the corresponding endpoint data.  Since the
endpoint increments have Gaussian densities and the variables
$\Gamma_\ell$ have continuous tilted-GUE densities, Lebesgue
differentiation lets $\delta\downarrow0$ and yields the same comparison
for a regular conditional law given $\mathcal G_\lambda$.
Consequently, after restricting the maximum and the integral in
$\rho_\lambda$ to the $\ell_\lambda$-neighborhoods used above, the
conditional law of the centered local profiles is, in total variation,
$o(1)$ from a product of the Brownian process in
\eqref{eq:chain-initial-tent} and independent rate-$2$ Brownian
bridges.  The affine slopes converge uniformly on
$\mathcal A_{\lambda,M}$ to the deterministic tent slopes by
\cite[Corollary~2.21]{GHZ25}.

Since $\rho_\lambda\wedge K$ is bounded, this conditional total-variation
comparison gives a deterministic number $c_{\lambda,K}$ such that
\begin{equation}\label{eq:rho-brownian-comparison}
\E_{\widehat{\Pp}_\lambda}\left[
\left|
\E_{\widehat{\Pp}_\lambda}
\left[\rho_\lambda\wedge K\mid\mathcal G_\lambda\right]
-c_{\lambda,K}
\right|
\1_{\mathcal A_{\lambda,M}}
\right]
\longrightarrow0.
\end{equation}
The localization error is $o_{L^1}(1)$ by
\eqref{eq:chain-initial-tent}, \cite[Corollary~2.21]{GHZ25}, and
\cite[Corollary~10.7]{DOV22}.  Taking expectations in
\eqref{eq:rho-brownian-comparison}, then using
\eqref{eq:chain-good-event} and letting $M\to\infty$, gives
\begin{equation}\label{eq:rho-truncated}
\left\|
\E_{\widehat{\Pp}_\lambda}
\left[\rho_\lambda\wedge K\mid\mathcal G_\lambda\right]
-
\E_{\widehat{\Pp}_\lambda}[\rho_\lambda\wedge K]
\right\|_1
\longrightarrow0.
\end{equation}

It remains to remove the truncation.  The normalization in
\eqref{eq:chain-residual-factor} can be controlled explicitly.  From
\eqref{eq:chain-tilted-measure},
\begin{equation}\label{eq:ratio-normalization-exact}
\E_{\widehat{\Pp}_\lambda}
\left[\frac{e^{2V_{\lambda R}}}{\mathcal Z_\lambda}\right]
=
\frac{\E_{\Pp}[e^{2V_{\lambda R}}]}
{\E_{\Pp}[\mathcal Z_\lambda]}.
\end{equation}
The numerator equals $e^{2\lambda R/3}$ by
\eqref{eq:exp-moment-proof}.  For the denominator, the endpoint
factorization above and the change of variables
$u_\ell=y_\ell-y_{\ell-1}$ give
\begin{equation}\label{eq:chain-Z-mean}
\E_{\Pp}[\mathcal Z_\lambda]
=
\prod_{\ell=1}^{n+1}
\left(
\sqrt{\frac{\pi\Delta_\ell}{2}}\,
\E\left[e^{2\Delta_\ell^{1/3}\chi}\right]
\right).
\end{equation}
The Laplace calculation in the proof of
\cref{lem:neg-tilted-GUE} gives
\[
\E\left[e^{2T^{1/3}\chi}\right]
=
\frac{e^{2T/3}}{4\sqrt{2\pi T}}
\left(1+o(1)\right),
\qquad T\to\infty.
\]
Since $\sum_{\ell=1}^{n+1}\Delta_\ell=\lambda R$,
\begin{equation}\label{eq:ratio-normalization}
\E_{\Pp}[\mathcal Z_\lambda]
=
8^{-(n+1)}e^{2\lambda R/3}(1+o(1)),
\qquad
\E_{\widehat{\Pp}_\lambda}
\left[\frac{e^{2V_{\lambda R}}}{\mathcal Z_\lambda}\right]
=
8^{n+1}(1+o(1)).
\end{equation}
In particular, the denominator in \eqref{eq:chain-residual-factor} is
bounded above and away from zero for all sufficiently large $\lambda$.

Let $z^*$ be a maximizer of the variational functional
\[
\Phi_\lambda(z)
=
h_\vee(z_0)+\sum_{\ell=1}^{n+1}
\cL(z_{\ell-1},s_{\ell-1};z_\ell,s_\ell),
\qquad z_{n+1}=0.
\]
Since $\mathcal Z_\lambda$ contains the integral over the unit cube
$Q(z^*)$ centered at $z^*$,
\begin{equation}\label{eq:ratio-cube-bound}
\frac{e^{2V_{\lambda R}}}{\mathcal Z_\lambda}
\le
\exp\left\{
2\sup_{z,z'\in Q(z^*)}
|\Phi_\lambda(z)-\Phi_\lambda(z')|
\right\}.
\end{equation}
On the localized event from
\eqref{eq:chain-profile-tent}--\eqref{eq:neg-max-gap}, the oscillation
in \eqref{eq:ratio-cube-bound} is a finite sum of unit-scale
oscillations of the tilted Brownian initial profile and of the one-sided
slab profiles.  The Cameron--Martin description in
\cref{lem:neg-Brownian-tilt} and the Brownian comparison in
\cite[Proposition~3.5]{GHZ25} give Gaussian exponential moments for
these oscillations, uniformly over the coarse variables.  Integrating
over the Gaussian endpoint density \eqref{eq:chain-endpoint-density}
and the Gaussian tail bound implicit in
\eqref{eq:GUE-tail-tight} removes the restriction to
$\mathcal A_{\lambda,M}$.  Together with the global parabolic estimate
outside the upper-tail windows, this yields some $q>1$ such that
\begin{equation}\label{eq:local-ratio-ui}
\sup_{\lambda\ge\lambda_0}
\E_{\widehat{\Pp}_\lambda}
\left[
\left(
\frac{e^{2V_{\lambda R}}}{\mathcal Z_\lambda}
\right)^q
\right]
<\infty.
\end{equation}
By \eqref{eq:ratio-normalization}, the same bound holds for
$\rho_\lambda^q$.  Since
$\E_{\widehat{\Pp}_\lambda}[\rho_\lambda]=1$,
\[
\E_{\widehat{\Pp}_\lambda}
\left[\rho_\lambda\1_{\{\rho_\lambda>K\}}\right]
\le
K^{1-q}\E_{\widehat{\Pp}_\lambda}[\rho_\lambda^q].
\]
Combining this estimate with \eqref{eq:rho-truncated}, then letting
$\lambda\to\infty$ and $K\to\infty$, proves
\eqref{eq:rho-conditional}.

By \eqref{eq:chain-approximation} and boundedness of $F$,
\[
F\left(\{X_{j,a}^{(\lambda)}\}_{j,a}\right)
-
F\left(\{Z_{j,a}^{(\lambda)}(y)\}_{j,a}\right)
\longrightarrow0
\]
in $L^{q/(q-1)}(\widehat{\Pp}_\lambda)$.  H\"older's inequality and
\eqref{eq:local-ratio-ui} therefore give
\begin{equation}\label{eq:weighted-coordinate-replacement}
\E_{\widehat{\Pp}_\lambda}
\left[
\rho_\lambda
\left|
F\left(\{X_{j,a}^{(\lambda)}\}_{j,a}\right)
-
F\left(\{Z_{j,a}^{(\lambda)}(y)\}_{j,a}\right)
\right|
\right]
\longrightarrow0.
\end{equation}
Finally, $F(\{Z_{j,a}^{(\lambda)}(y)\}_{j,a})$ is
$\mathcal G_\lambda$-measurable, and hence
\begin{align*}
\left|
\E_{\widehat{\Pp}_\lambda}
\left[
\rho_\lambda F\left(\{Z_{j,a}^{(\lambda)}(y)\}_{j,a}\right)
\right]
-
\E_{\widehat{\Pp}_\lambda}
\left[
F\left(\{Z_{j,a}^{(\lambda)}(y)\}_{j,a}\right)
\right]
\right|
\le
\|F\|_\infty
\left\|
\E_{\widehat{\Pp}_\lambda}
\left[\rho_\lambda\mid\mathcal G_\lambda\right]-1
\right\|_1
\longrightarrow0.
\end{align*}
Together with \eqref{eq:Q-chain-exact} and
\eqref{eq:weighted-coordinate-replacement}, this proves
\eqref{eq:neg-chain-replacement}.

\end{proof}

\begin{proof}[Proof of \cref{prop:negative-longtime}]
Fix finitely many points in $\R\times(-\infty,0)$.  Choose
$R>r_1>\cdots>r_n>0$ so that their distinct time coordinates are
$-r_1,\ldots,-r_n$, and adjoin the earlier point $(0,-\lambda R)$.
By \cref{thm:alltime}, the rescaled joint law of the original points is
the law under $\mathbb Q_{\lambda R}$ of
\begin{equation}\label{eq:negative-Q-vector}
\left\{
\frac{
\KPZ(\lambda^{1/2}x_{j,a}/\sqrt2,\lambda(R-r_j);h_\vee)
-V_{\lambda R}+\lambda r_j
}{\sqrt{2\lambda}}
\right\}_{j,a}.
\end{equation}
Apply \cref{lem:neg-chain-replacement}.  From
\eqref{eq:chain-endpoint-density}, the increments
$y_\ell-y_{\ell-1}$ are independent centered Gaussians with variances
$\Delta_\ell/4$.  Since $y_{n+1}=0$,
\begin{equation}\label{eq:space-gaussian-limit}
\left\{
\frac{\sqrt2\,y_j}{\sqrt\lambda}
\right\}_{j=1}^n
\Longrightarrow
\left\{G_-(r_j)\right\}_{j=1}^n,
\end{equation}
where $G_-$ is a centered Gaussian process with
\[
\operatorname{Cov}\left(G_-(r),G_-(s)\right)
=\frac12\min\{r,s\}.
\]

Conditionally on the endpoints, the variables $\Gamma_\ell$ are
independent of the endpoint increments and have the tilted GUE laws
identified in \cref{lem:neg-tilted-GUE}.  Hence
\[
\left\{
\frac{\Delta_\ell-\Gamma_\ell}{\sqrt{2\lambda}}
\right\}_{\ell=1}^{n+1}
\Longrightarrow
\left\{N_\ell\right\}_{\ell=1}^{n+1},
\]
where the $N_\ell$ are independent centered Gaussians with
\(\operatorname{Var}(N_\ell)
=\frac{r_{\ell-1}-r_\ell}{2},\)
and this convergence is joint with \eqref{eq:space-gaussian-limit}.
Moreover,
\(\frac{(y_\ell-y_{\ell-1})^2}{\Delta_\ell}
=O_{\widehat{\Pp}_\lambda}(1),\)
so the parabolic correction is negligible after division by
$\sqrt\lambda$.  Therefore
\begin{equation}\label{eq:height-gaussian-limit}
\left\{
\frac{
\lambda r_j-
\sum_{\ell=j+1}^{n+1}\cL(y_{\ell-1},s_{\ell-1};y_\ell,s_\ell)
}{\sqrt{2\lambda}}
\right\}_{j=1}^n
\Longrightarrow
\left\{G_+(r_j)\right\}_{j=1}^n,
\end{equation}
where $G_+$ is independent of $G_-$ and
\(\Cov\left(G_+(r),G_+(s)\right)
=\frac12\min\{r,s\}.\)

Define \(B_1(r)=G_+(r)-G_-(r),
B_2(r)=G_+(r)+G_-(r).\)
Then $B_1$ and $B_2$ are independent standard Brownian motions.  
Using \eqref{eq:chain-replacement-coordinate},
\eqref{eq:space-gaussian-limit}, and \eqref{eq:height-gaussian-limit},
the $(j,a)$ coordinate converges to
\[
G_+(r_j)-|x_{j,a}-G_-(r_j)|
=
\min\left\{
B_1(r_j)+x_{j,a},
B_2(r_j)-x_{j,a}
\right\}.
\]
Together with \eqref{eq:negative-Q-vector} and
\cref{lem:neg-chain-replacement}, this proves
\eqref{eq:negative-longtime}.
\end{proof}


\begin{thebibliography}{99}

\bibitem{BL24}
J.~Baik and Z.~Liu,
\emph{Pinched-up periodic KPZ fixed point},
arXiv:2403.01624 (2024).

\bibitem{DDV24}
S.~Das, D.~Dauvergne, and B.~Vir\'ag,
\emph{Upper tail large deviations of the directed landscape},
arXiv:2405.14924 (2024).

\bibitem{DT24}
S.~Das and L.-C.~Tsai,
\emph{Solving marginals of the LDP for the directed landscape},
arXiv:2405.17041 (2024).

\bibitem{DOV22}
D.~Dauvergne, J.~Ortmann, and B.~Vir\'ag,
\emph{The directed landscape},
Acta Math. \textbf{229} (2022), no.~2, 201--285.
\href{https://doi.org/10.4310/ACTA.2022.v229.n2.a1}{doi:10.4310/ACTA.2022.v229.n2.a1}.


\bibitem{DZ25}
D.~Dauvergne and L.~Zhang,
\emph{Characterization of the directed landscape from the KPZ fixed point},
arXiv:2412.13032v3 (2025).

\bibitem{GH22}
S.~Ganguly and M.~Hegde,
\emph{Sharp upper tail behavior of line ensembles via the tangent method},
arXiv:2208.08922 (2022).

\bibitem{GHZ25}
S.~Ganguly, M.~Hegde, and L.~Zhang,
\emph{Brownian bridge limit of path measures in the upper tail of KPZ models},
arXiv:2311.12009v2 (2025).

\bibitem{GLLT23}
P.~Y.~Gaudreau Lamarre, Y.~Lin, and L.-C.~Tsai,
\emph{KPZ equation with a small noise, deep upper tail and limit shape},
Probab. Theory Related Fields \textbf{185} (2023), no.~3--4, 885--920.
\href{https://doi.org/10.1007/s00440-022-01185-2}{doi:10.1007/s00440-022-01185-2}.

\bibitem{JR21}
K.~Johansson and M.~Rahman,
\emph{Multitime distribution in discrete polynuclear growth},
Comm. Pure Appl. Math. \textbf{74} (2021), no.~12, 2561--2627.

\bibitem{LL25}
Y.~Liao and Z.~Liu,
\emph{Multipoint distributions of the KPZ fixed point with compactly supported initial conditions},
arXiv:2509.03246 (2025).

\bibitem{LT25}
Y.~Lin and L.-C.~Tsai,
\emph{Spacetime limit shapes of the KPZ equation in the upper tails},
Comm. Math. Phys. \textbf{406} (2025), Paper No.~113.
\href{https://doi.org/10.1007/s00220-025-05284-8}{doi:10.1007/s00220-025-05284-8}.

\bibitem{Liu22}
Z.~Liu,
\emph{Multipoint distribution of TASEP},
Ann. Probab. \textbf{50} (2022), no.~4, 1255--1321.
\href{https://doi.org/10.1214/21-AOP1557}{doi:10.1214/21-AOP1557}.

\bibitem{LW24}
Z.~Liu and Y.~Wang,
\emph{A conditional scaling limit of the KPZ fixed point with height tending to infinity at one location},
Electron. J. Probab. \textbf{29} (2024), 1--27.
\href{https://doi.org/10.1214/24-EJP1092}{doi:10.1214/24-EJP1092}.

\bibitem{LZ25}
Z.~Liu and R.~Zhang,
\emph{An upper tail field of the KPZ fixed point},
Comm. Math. Phys. \textbf{406} (2025), Paper No.~198.
\href{https://doi.org/10.1007/s00220-025-05375-6}{doi:10.1007/s00220-025-05375-6}.

\bibitem{MQR21}
K.~Matetski, J.~Quastel, and D.~Remenik,
\emph{The KPZ fixed point},
Acta Math. \textbf{227} (2021), no.~1, 115--203.
\href{https://doi.org/10.4310/ACTA.2021.v227.n1.a3}{doi:10.4310/ACTA.2021.v227.n1.a3}.


\bibitem{NZ22}
R.~Nissim and R.~Zhang,
\emph{Edgeworth-type expansion for the one-point distribution of the KPZ fixed point with a large height at a prior location},
arXiv:2210.04999 (2022).

\bibitem{Rah26}
M.~Rahman,
\emph{Revisiting the temporal law in KPZ random growth},
to appear in Ark. Mat.; arXiv:2504.19975v2 (2025).

\bibitem{RY99}
D.~Revuz and M.~Yor,
\emph{Continuous Martingales and Brownian Motion},
3rd ed., Grundlehren der mathematischen Wissenschaften, vol.~293,
Springer-Verlag, Berlin, 1999.

\end{thebibliography}
\end{document}